\documentclass[11pt]{amsart}
\usepackage{amsfonts}

\setcounter{MaxMatrixCols}{10}

\newtheorem{theorem}{Theorem}
\theoremstyle{plain}

\newtheorem{lemma}{Lemma}

\newtheorem{remark}{Remark}

\numberwithin{equation}{section}
\input{tcilatex}

\begin{document}
\title{Multiple solutions to singular fourth order elliptic equations }
\author{Mohammed Benalili and Kamel Tahri}
\address{Dept. Maths. Faculty of Sciences, University UABB, B.P. 119 Tlemcen}
\email{m\_benalili@mail.univ-tlemcen.dz}
\subjclass[2000]{Primary 58J05}
\keywords{Singuliar fourth order elliptic equation, Hardy inequality,
Sobolev's exponent growth. Nehari manifold.}
\maketitle

\begin{abstract}
Using the method of Nehari manifold, we prove the existence of at least two
distinct weak solutions to elliptic equation of four order with
singulatities and with critical Sobolev growth.
\end{abstract}

\section{\protect\smallskip Introduction}

Fourth order elliptic equations have been intensively investigated the last
tree decades particularly after the discovery of an important conformally
invariant operator by Paneitz on $4$ - dimensional Riemannian manifolds \cite%
{19} and whose definition was extended to higher dimension by Branson \cite%
{8}.This operator is closely related to the problem of prescribed $Q$-
curvature. Many works have been devoted to this subject ( see \cite{1}, \cite%
{2}, \cite{3}, \cite{4}, \cite{5}, \cite{6}, \cite{7}, \cite{11}, \cite{12}, 
\cite{13}, \cite{14}, \cite{15}, \cite{16}, \cite{17}, \cite{20}, \cite{21}, 
\cite{22}, \cite{23}, \cite{24} ). Let $(M,g)$ a compact smooth Riemannian
of dimension $n\geq 5$ with a metric $g$. We denote by $H_{2}^{2}(M)$ the
standard Sobolev space which is the completion of the space $C^{\infty
}\left( M\right) $ with respect to the norm 
\begin{equation*}
\left\Vert \varphi \right\Vert _{2,2}=\sum_{k=0}^{k=2}\left\Vert \nabla
^{k}\varphi \right\Vert _{2}\text{.}
\end{equation*}%
$H_{2}^{2}(M)$ will be endowed with the equivalent suitable norm

\begin{equation*}
\left\Vert u\right\Vert _{H_{2}^{2}(M)}=(\int_{M}\left( \left( \Delta
_{g}u\right) ^{2}+\left\vert \nabla _{g}u\right\vert ^{2}+u^{2}\right)
dv_{g})^{\frac{1}{2}}\text{.}
\end{equation*}%
Recently, Madani \cite{18}, has considered the Yamabe problem with
singularities which he solved under some geometric conditions. The first
author in \cite{6} considered singular fourth order elliptic equations with
of the form%
\begin{equation}
\Delta ^{2}u-\nabla ^{i}\left( a(x)\nabla _{i}u\right) +b(x)u=f\left\vert
u\right\vert ^{N-2}u  \label{1}
\end{equation}%
where the functions $a$ and $b$ are in $L^{s}(M)$, $s>\frac{n}{2}$ and in $%
L^{p}(M)$, $p>\frac{n}{4}$ respectively, $N=\frac{2n}{n-4}$ is the Sobolev
critical exponent in the embedding $H_{2}^{2}\left( M\right) \hookrightarrow
L^{N}\left( M\right) $. He established the following result. Let $\left(
M,g\right) $ be a compact $n$-dimensional Riemannian manifold, $n\geq 6$, $%
a\in L^{s}(M)$, $b\in L^{p}(M)$, with $s>\frac{n}{2}$, $p>\frac{n}{4}$, $f$ $%
\in C^{\infty }(M)$ a positive function and $x_{o}\in M$ such that $%
f(x_{o})=\max_{x\in M}f(x)$.

\begin{theorem}
\label{th01} Let $\left( M,g\right) $ be a compact $n$-dimensional
Riemannian manifold, $n\geq 6$, $a\in L^{s}(M)$, $b\in L^{p}(M)$, with $s>%
\frac{n}{2}$, $p>\frac{n}{4}$, $f$ $\in C^{\infty }(M)$ a positive function
and $P\in M$ such that $f(P)=\max_{x\in M}f(x)$.

For $n\geq 10$,or $n=9$ and $\frac{9}{4}<p<11$ or $n=8$ and $2<p<5$ or $n=7$
and $\frac{7}{2}<s<9$ , $\frac{7}{4}<p<3$ we suppose that 
\begin{equation*}
\frac{n^{2}+4n-20}{6\left( n-6\right) (n^{2}-4)}R_{g}\left( P\right) -\frac{%
n-4}{2n\left( n-2\right) }\frac{\Delta f(P)}{f(P)}>0\text{.}
\end{equation*}%
For $n=6$ and $\frac{3}{2}<p<2$, $3<s<4$, we suppose that 
\begin{equation*}
R_{g}(P)>0\text{.}
\end{equation*}%
Then the equation (\ref{1}) has a non trivial weak solution $u$ in $%
H_{2}^{2}\left( M\right) $. Moreover if $a\in H_{1}^{s}\left( M\right) $,
then

$u\in $ $C^{0,\beta }$, for some $\beta \in \left( 0,1-\frac{n}{4p}\right) $%
..
\end{theorem}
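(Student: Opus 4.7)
The plan is to obtain the solution as a critical point of the energy functional
$$I(u)=\frac{1}{2}\int_M\bigl((\Delta_g u)^2+a\,|\nabla_g u|^2+b\,u^2\bigr)\,dv_g-\frac{1}{N}\int_M f\,|u|^N\,dv_g,$$
defined on $H_{2}^{2}(M)$, by applying the mountain pass theorem of Ambrosetti--Rabinowitz. The integrability assumptions on $a$ and $b$ together with the embedding $H_{2}^{2}(M)\hookrightarrow L^{N}(M)$ and H\"older's inequality make $I$ well defined and $C^{1}$. First I would check that $I$ satisfies the mountain pass geometry: near the origin the quadratic part dominates and gives $I(u)\geq \alpha\|u\|_{H_2^2}^2$ for $\|u\|_{H_2^2}$ small (possibly after passing to an equivalent norm and using coercivity arguments in the spirit of a Hardy-type bound for the singular lower-order terms), while for any fixed $v\not\equiv 0$ one has $I(tv)\to -\infty$ as $t\to+\infty$ because $N>2$. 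This yields a Palais--Smale sequence at the min-max level $c=\inf_{\gamma\in\Gamma}\max_{t\in[0,1]}I(\gamma(t))$.

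Next I would analyze the compactness threshold. A standard concentration-compactness argument shows that $I$ satisfies the $(PS)_{c}$ condition whenever
$$c<c^{\ast}:=\frac{2}{n}\,\frac{1}{\bigl(K_{0}\,f(P)^{(n-4)/n}\bigr)^{n/4}},$$
where $K_{0}$ is the sharp constant in the Euclidean Sobolev inequality $\|u\|_{N}^{2}\le K_{0}\|\Delta u\|_{2}^{2}$. Below this level a weakly convergent $(PS)$ sequence cannot lose mass via concentration of Aubin--Talenti bubbles, so it converges strongly to a nontrivial critical point of $I$, which is the required weak solution.

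The main obstacle, and the source of the dimensional restrictions and the sign condition in the statement, is to show that \emph{this threshold is actually attained}, i.e.\ that $c<c^{\ast}$. For this I would use test functions of Aubin type, obtained by transplanting the extremal
$$U_{\varepsilon}(x)=\bigl(\varepsilon+|x|^{2}\bigr)^{-(n-4)/2}$$
to $M$ via a radial cut-off in normal coordinates at the maximum point $P$ of $f$, and then expand $I(tU_{\varepsilon})$ in powers of $\varepsilon$. The leading Euclidean terms reproduce $c^{\ast}$, while the first correction involves a linear combination of the scalar curvature $R_{g}(P)$ and of $\Delta f(P)/f(P)$, with coefficients coming from integrals of powers of $(1+|x|^2)^{-1}$ against the bubble; these coefficients are only positive and the corresponding integrals only convergent in the dimension ranges listed, which is exactly where the hypothesis
$$\frac{n^{2}+4n-20}{6(n-6)(n^{2}-4)}\,R_{g}(P)-\frac{n-4}{2n(n-2)}\,\frac{\Delta f(P)}{f(P)}>0$$
(respectively $R_{g}(P)>0$ in dimension $6$) forces $\max_{t\geq 0}I(tU_{\varepsilon})<c^{\ast}$ for $\varepsilon$ small enough. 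Carrying this expansion through all the case distinctions on $n$, $s$, $p$ is the technical core of the proof.

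Once the critical point $u\in H_{2}^{2}(M)$ has been produced, the final $C^{0,\beta}$-regularity statement would follow from a standard bootstrap: the right-hand side $f|u|^{N-2}u$ lies in $L^{q}$ for some $q>1$ by Sobolev, the assumption $a\in H_{1}^{s}(M)$ allows one to rewrite $\nabla^{i}(a\nabla_{i}u)$ in a favorable form and iterate elliptic $L^{p}$ estimates for $\Delta^{2}$, gaining integrability at each step until Morrey's embedding yields H\"older continuity with $\beta\in(0,1-\tfrac{n}{4p})$.
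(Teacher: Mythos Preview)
The paper does not contain a proof of this theorem. Theorem~\ref{th01} is quoted in the introduction as a result established by the first author in an earlier work (the paragraph preceding the statement reads ``He established the following result''), and it serves only as background and motivation for the present paper, whose main contributions concern equation~(\ref{3'}) and are handled by the Nehari manifold method rather than by a direct mountain pass argument. Consequently there is no ``paper's own proof'' against which to compare your proposal.

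That said, your sketch is the standard strategy for results of this type and is broadly consistent with the variational machinery that the paper deploys for its own theorems: the compactness threshold $c^{\ast}=\dfrac{2}{n}\,K_{0}^{-n/4}\,f(P)^{-(n-4)/4}$ you identify matches the condition~(\ref{C1}) used later, and the Aubin-type test function expansion you describe is exactly what the paper carries out in Sections~4--6 (for the Nehari setting). One substantive difference worth noting is that the paper, for its own results, replaces the mountain pass framework by minimisation on the Nehari manifold $N_{\lambda}$, splitting it into $N_{\lambda}^{+}$ and $N_{\lambda}^{-}$ to obtain two distinct solutions; your mountain pass outline would yield a single solution, which is indeed all that Theorem~\ref{th01} claims.
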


For fixed $R\in M$, we define the function $\rho $ on $M$ by

\begin{equation}
\rho (Q)=\left\{ 
\begin{array}{c}
d(R,Q)\text{ \ if \ \ \ \ \ \ }d(R,Q)<\delta (M) \\ 
\delta (M)\text{ \ if\ \ \ \ \ }d(R,Q)\geq \delta (M)%
\end{array}%
\right.  \label{2'}
\end{equation}%
where $\delta (M)$ denotes the injectivity radius of $M$.

In this paper, we are concerned with the following problem: for real numbers 
$\sigma $ and $\mu $, consider the equation in the distribution sense

\begin{equation}
\Delta ^{2}u-\nabla ^{i}(a\rho ^{-\mu }\nabla _{i}u)+\rho ^{-\alpha
}bu=\lambda \left\vert u\right\vert ^{q-2}u+f(x)\left\vert u\right\vert
^{N-2}u  \label{3'}
\end{equation}%
where the functions $a$ and $b$ are smooth $M$ and $1<q<2$. \ Denote also by 
$P_{g}$ the operator define on $H_{2}^{2}\left( M\right) $ by $u\rightarrow
P_{g}(u)=\Delta ^{2}u-\nabla ^{i}(a\rho ^{-\mu }\nabla _{i}u)+\rho ^{-\alpha
}bu$. Our main results state as follows:

\begin{theorem}
\label{Th1} Let $0<\sigma <2$ and $0<\mu <4$. Suppose that the operator $%
P_{g}$ is coercive and 
\begin{equation}
\left\{ 
\begin{array}{c}
\frac{\Delta f(x_{o})}{f\left( x_{o}\right) }<\frac{1}{3}\left( \frac{%
(n-1)n\left( n^{2}+4n-20\right) }{\left( n^{2}-4\right) \left( n-4\right)
\left( n-6\right) }\frac{1}{\left( 1+\left\Vert \frac{a}{\rho ^{\sigma }}%
\right\Vert _{r}+\left\Vert \frac{b}{\rho ^{\mu }}\right\Vert _{s}\right) ^{%
\frac{4}{n}}}-1\right) S_{g}\left( x_{o}\right) \text{ in case }n>6 \\ 
S_{g}(x_{\circ })>0\text{ \ in case }n=6\text{.}%
\end{array}%
\right.  \tag{C}  \label{C}
\end{equation}%
Then there is $\lambda _{\ast }>0$ such that if $\lambda \in (0,$ $\lambda
_{\ast })$, the equation (\ref{3'}) possesses at least two distinct non
trivial solutions in the \ distribution sense.
\end{theorem}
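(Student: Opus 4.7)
The plan is to apply the Nehari manifold method for concave--convex problems (in the spirit of Tarantello and Brown--Zhang), adapted to the singular fourth-order setting of Theorem \ref{th01}. Associated to (\ref{3'}) is the energy functional
\[
I_\lambda(u) \;=\; \frac{1}{2}\langle P_g u, u\rangle_{L^2} \;-\; \frac{\lambda}{q}\int_M |u|^q\,dv_g \;-\; \frac{1}{N}\int_M f\,|u|^N\,dv_g,
\]
which, by coercivity of $P_g$ and Hardy-type inequalities controlling the singular weights $\rho^{-\sigma}$ and $\rho^{-\mu}$, is well-defined and $C^1$ on $H_2^2(M)$; its critical points are exactly the weak solutions of (\ref{3'}). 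The Nehari manifold $\mathcal{N}_\lambda = \{u\neq 0 : \langle I_\lambda'(u),u\rangle = 0\}$ splits, according to the sign of the second derivative of the fibering map $\psi_u(t) = I_\lambda(tu)$ at $t=1$, into three pieces $\mathcal{N}_\lambda^+, \mathcal{N}_\lambda^0, \mathcal{N}_\lambda^-$.

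First I would analyze $\psi_u$ on each ray. Since $1<q<2<N$, the function $\psi_u$ has the classical shape with at most one local minimum followed by one local maximum; a careful count of the zeros of $\psi_u'$ produces a threshold $\lambda_*>0$ such that $\mathcal{N}_\lambda^0 = \emptyset$ for every $\lambda\in(0,\lambda_*)$. Hence $\mathcal{N}_\lambda = \mathcal{N}_\lambda^+\cup\mathcal{N}_\lambda^-$ is a disjoint union of two $C^1$ manifolds on which Lagrange multipliers apply, and $I_\lambda$ is bounded below on $\mathcal{N}_\lambda$, with $c^+ := \inf_{\mathcal{N}_\lambda^+} I_\lambda < 0$ while $c^- := \inf_{\mathcal{N}_\lambda^-} I_\lambda > 0$.

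The first solution $u_1$ is obtained by minimizing $I_\lambda$ on $\mathcal{N}_\lambda^+$: coercivity bounds any minimizing sequence in $H_2^2(M)$, the weak limit $u_1$ satisfies $I_\lambda(u_1)\leq c^+ < 0$ hence is nontrivial and lies in $\mathcal{N}_\lambda^+$, and Ekeland's variational principle combined with the implicit function theorem applied to the Nehari constraint yields $I_\lambda'(u_1)=0$. The second solution is sought by minimizing over $\mathcal{N}_\lambda^-$; the Palais--Smale condition fails at levels above the critical threshold
\[
c^* \;:=\; \frac{2}{n}\,K_0^{-n/4}\,\bigl(\max_M f\bigr)^{-(n-4)/4},
\]
where $K_0=K(n,2)$ is the best constant in the embedding $H_2^2(M)\hookrightarrow L^N(M)$, so one must verify the strict inequality $c^-<c^*$.

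This last step is the principal obstacle. I would build test functions $u_\varepsilon$ concentrated at the maximum point $x_0$ of $f$ and modeled on the Euclidean extremals of the $\Delta^2$-Sobolev inequality, and expand $\max_{t\geq 0}I_\lambda(tu_\varepsilon)$ as $\varepsilon\to 0$. The scalar-curvature contribution $S_g(x_0)$ arising from the normal-coordinate expansion of $dv_g$ and of the metric-dependent terms in $\Delta^2$, and the contribution of $\Delta f(x_0)/f(x_0)$ arising from Taylor-expanding $f$, appear at the same order in $\varepsilon$; their balance is exactly the content of condition (\ref{C}) when $n>6$, while for $n=6$ only the curvature term survives and $S_g(x_0)>0$ is what is needed. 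The singular lower-order terms $a\rho^{-\sigma}|\nabla u_\varepsilon|^2$ and $b\rho^{-\mu}u_\varepsilon^2$ produce only subleading corrections whose absorption into the quantities $\|a/\rho^\sigma\|_r$ and $\|b/\rho^\mu\|_s$ explains the factor $(1+\|a/\rho^\sigma\|_r+\|b/\rho^\mu\|_s)^{4/n}$ in (\ref{C}); the concave term $\lambda|u|^{q-2}u$ contributes a negative correction which can only help, and provides the smallness threshold on $\lambda$. Once $c^-<c^*$ is secured, a Brezis--Lieb / concentration-compactness splitting gives strong convergence of a minimizing sequence on $\mathcal{N}_\lambda^-$, yielding $u_2$ with $I_\lambda(u_2)=c^->0>c^+=I_\lambda(u_1)$; in particular $u_1\neq u_2$ and both are nontrivial weak solutions of (\ref{3'}).
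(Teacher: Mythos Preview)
Your proposal is correct and follows essentially the same route as the paper: Nehari manifold splitting $N_\lambda=N_\lambda^+\cup N_\lambda^0\cup N_\lambda^-$, emptiness of $N_\lambda^0$ for small $\lambda$ via the fibering maps, $\alpha_\lambda^+<0<\alpha_\lambda^-$, Ekeland's principle to produce Palais--Smale sequences, a Br\'ezis--Lieb compactness argument below the threshold $\frac{2}{n}K_0^{-n/4}(\max f)^{-(n-4)/4}$, and finally the concentrated test functions $u_\varepsilon$ at $x_0$ whose energy expansion yields exactly condition (\ref{C}). The only minor divergence is that the paper runs the same sub-threshold compactness argument for the minimizer on $N_\lambda^+$ as well (Theorem~\ref{thm2}), rather than arguing directly from weak lower semicontinuity as you sketch; since $\alpha_\lambda^+<0$ lies automatically below the threshold, both presentations succeed.
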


The proof of Theorem \ref{Th1} relies on the following Hardy-Sobolev
inequality ( see \cite{4}).

\begin{lemma}
\label{Lem} Let $\left( M,g\right) $ be a compact $n$- dimensional
Riemannian manifold, and $p$, $q$ and $\gamma $ real numbers satisfying

\begin{equation*}
1\leq q\leq p\leq \frac{nq}{n-2q}\text{, }n>2q\text{, }\frac{\gamma }{p}%
=-2+n\left( \frac{1}{q}-\frac{1}{p}\right) >-\frac{n}{p}\text{.}
\end{equation*}%
For any $\varepsilon >0$, there is a constant $A(\varepsilon ,q,\gamma )$
such that

\begin{equation*}
\forall f\in H_{2}^{q}(M)\text{, \ }\left\Vert f\right\Vert _{p,\rho
^{\gamma }}^{q}\leq \left( 1+\varepsilon \right) K^{q}(n,q,\gamma
)\left\Vert \nabla ^{2}f\right\Vert _{q}^{q}+A(\varepsilon ,q,\gamma
)\left\Vert f\right\Vert _{q}^{q}\text{.}
\end{equation*}%
In particular in case $\gamma =0$, $K(n,q,0)=K(n,q)$ is the best constant in
Sobolev's inequality.
\end{lemma}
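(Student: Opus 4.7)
The proof I would sketch follows the localization paradigm of Aubin and Hebey-Vaugon for transferring sharp Euclidean Sobolev-type inequalities to a compact Riemannian manifold, adapted here to the weighted Hardy-Sobolev setting. The plan is to split $f$ into a piece supported near the singular point $R$, which is handled by pulling back to a normal chart and invoking the sharp Euclidean Hardy-Sobolev (Caffarelli-Kohn-Nirenberg) inequality
\[
\left(\int_{\mathbb{R}^{n}} |u|^{p} |x|^{\gamma}\, dx\right)^{q/p} \leq K(n,q,\gamma)^{q} \int_{\mathbb{R}^{n}} |D^{2} u|^{q}\, dx, \qquad u \in C_{c}^{\infty}(\mathbb{R}^{n}),
\]
and a piece supported away from $R$, where the weight is bounded and the estimate reduces to the unweighted Sobolev inequality combined with interpolation.

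To implement this, I would fix $\delta \in (0,\delta(M)/4)$ and a cutoff $\chi \in C_{c}^{\infty}(B(R,2\delta))$ with $\chi \equiv 1$ on $B(R,\delta)$ and $|\nabla^{j}\chi|_{g} \leq C\delta^{-j}$, then write $f = f_{1}+f_{2}$ with $f_{1} = \chi f$ and $f_{2} = (1-\chi)f$. A Minkowski-type splitting reduces the proof to estimating $\|f_{1}\|_{p,\rho^{\gamma}}^{q}$ and $\|f_{2}\|_{p,\rho^{\gamma}}^{q}$ separately. For $f_{1}$, read the integrand in normal coordinates centered at $R$: there $\rho(x)=|x|$, $dv_{g} = (1+O(|x|^{2}))dx$, and $\nabla^{2} f$ differs from the Euclidean Hessian $D^{2} f$ by Christoffel terms of size $O(|x|)|\nabla f|$. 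Choosing $\delta$ small enough that these distortions contribute a factor at most $1+\varepsilon/2$, the Euclidean inequality applied to $f_{1}$ yields
\[
\|f_{1}\|_{p,\rho^{\gamma}}^{q} \leq (1+\varepsilon/2) K(n,q,\gamma)^{q} \int_{M} |\nabla^{2} f_{1}|^{q}\, dv_{g} + C\|\nabla f_{1}\|_{q}^{q}.
\]
Expanding $\nabla^{2}(\chi f)$ by the Leibniz rule and absorbing the cross terms $\nabla\chi\otimes\nabla f$ and $f\nabla^{2}\chi$ via the interpolation $\|\nabla f\|_{q}^{q}\leq \varepsilon' \|\nabla^{2} f\|_{q}^{q} + C(\varepsilon')\|f\|_{q}^{q}$ converts the above into
\[
\|f_{1}\|_{p,\rho^{\gamma}}^{q} \leq (1+\varepsilon) K(n,q,\gamma)^{q} \|\nabla^{2} f\|_{q}^{q} + A_{1}(\varepsilon)\|f\|_{q}^{q}.
\]

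For $f_{2}$, the scaling condition $\gamma/p = -2+n(1/q-1/p)$ together with $p\leq nq/(n-2q)$ forces $\gamma\leq 0$, so on the support of $f_{2}$ we have $\rho^{\gamma}\leq \delta^{\gamma}$ and the weighted norm is equivalent to the unweighted one. In the subcritical range $p < nq/(n-2q)$ the embedding $H_{2}^{q}(M)\hookrightarrow L^{p}(M)$ is compact, and Ehrling's lemma gives $\|f_{2}\|_{p}^{q}\leq \varepsilon'\|\nabla^{2} f\|_{q}^{q} + C(\varepsilon')\|f\|_{q}^{q}$. In the critical case $p=nq/(n-2q)$ one necessarily has $\gamma=0$, and the inequality becomes the known sharp unweighted Sobolev inequality on $M$. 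Summing both contributions produces the claimed bound with $A(\varepsilon,q,\gamma)$ collecting all the interpolation constants.

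The main technical obstacle is the precise bookkeeping of the normal-coordinate distortions near $R$: one must verify that the product of the volume-element distortion, the metric-norm distortion on $\nabla^{2} f_{1}$, and the Euclidean sharp constant $K(n,q,\gamma)$ remains within a factor $1+\varepsilon$, while every lower-order remainder (gradients of $\chi$, mixed Leibniz terms, and curvature corrections relating $\nabla^{2}$ to $D^{2}$) is absorbed by $\varepsilon K(n,q,\gamma)^{q}\|\nabla^{2} f\|_{q}^{q} + A(\varepsilon)\|f\|_{q}^{q}$. This is delicate precisely because the singularity of $\rho^{\gamma}$ at $R$ amplifies any error that is not itself small near $R$, which is the very reason the inequality must be formulated with an arbitrary $\varepsilon$ rather than a universal constant.
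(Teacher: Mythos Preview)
The paper does not actually contain a proof of this lemma: it is stated with the parenthetical ``(see \cite{4})'' immediately preceding it, and no argument is given in the body of the paper. The companion statements Theorem~\ref{th1} and Theorem~\ref{th2} in Section~2 are likewise quoted with ``whose proofs are given in (\cite{6})''. So there is no in-paper proof to compare against; the authors treat this Hardy--Sobolev inequality as a black box imported from their earlier work.

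Your proposed argument is the standard Aubin/Hebey--Vaugon localization scheme and is almost certainly what the cited reference does: isolate a neighbourhood of the singular point, transfer to $\mathbb{R}^{n}$ via normal coordinates, invoke the sharp Euclidean second-order weighted inequality there, and handle the complement by the unweighted embedding since $\rho^{\gamma}$ is bounded away from $R$. One point deserves a word of caution: the na\"{\i}ve Minkowski step $\|f\|_{p,\rho^{\gamma}}^{q}\le(\|f_{1}\|+\|f_{2}\|)^{q}$ introduces a convexity factor that would spoil the sharp constant. You need the weighted Young-type splitting $(a+b)^{q}\le(1+\eta)^{q-1}a^{q}+C(\eta)b^{q}$ with $\eta$ chosen after $\varepsilon$, so that the $(1+\eta)^{q-1}(1+\varepsilon/2)$ in front of $\|\nabla^{2}f\|_{q}^{q}$ stays below $1+\varepsilon$ while the $C(\eta)$ is absorbed into $A(\varepsilon,q,\gamma)$. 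With that adjustment your outline is sound.
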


For brevity along all this work we put $K_{o}=K(n,2)$.

Let $\sigma $ and $\mu $ be as in Theorem \ref{Th1}, the Hardy- Sobolev
inequality given by Lemma \ref{Lem} leads to%
\begin{equation*}
\int_{M}\frac{\left\vert \nabla u\right\vert ^{2}}{\rho ^{\sigma }}%
dv_{g}\leq C(\left\Vert \nabla \left\vert \nabla u\right\vert \right\Vert
^{2}+\left\Vert \nabla u\right\Vert ^{2})
\end{equation*}%
and since 
\begin{equation*}
\left\Vert \nabla \left\vert \nabla u\right\vert \right\Vert ^{2}\leq
\left\Vert \nabla ^{2}u\right\Vert ^{2}\leq \left\Vert \Delta u\right\Vert
^{2}+\beta \left\Vert \nabla u\right\Vert ^{2}
\end{equation*}%
where $\beta >0$ is a constant and it is well known that for any $%
\varepsilon >0$ there is a constant $c\left( \varepsilon \right) >0$ such
that 
\begin{equation*}
\left\Vert \nabla u\right\Vert ^{2}\leq \varepsilon \left\Vert \Delta
u\right\Vert ^{2}+c\left\Vert u\right\Vert ^{2}\text{.}
\end{equation*}%
Hence 
\begin{equation}
\int_{M}\frac{\left\vert \nabla u\right\vert ^{2}}{\rho ^{\sigma }}%
dv_{g}\leq C\left( 1+\varepsilon \right) \left\Vert \Delta u\right\Vert
^{2}+A\left( \varepsilon ,\sigma \right) \left\Vert u\right\Vert ^{2}\text{.}
\label{3iv}
\end{equation}%
Let $K(n,2,\sigma )$ be the best constant in inequality (\ref{3iv}) and $%
K(n,2,\mu )$ the best one in the inequality%
\begin{equation*}
\int_{M}\frac{\left\vert u\right\vert ^{2}}{\rho ^{\mu }}dv_{g}\leq C\left(
1+\varepsilon \right) \left\Vert \Delta u\right\Vert ^{2}+A\left(
\varepsilon ,\mu \right) \left\Vert u\right\Vert ^{2}\text{.}
\end{equation*}%
For any $0<\sigma <2$ and $0<\mu <4$, let $u_{\sigma ,\mu \text{ }}$be the
solution of Equation (\ref{3}). In the sharp case $\sigma =2$ and $\mu =4$,
we obtain the following result

\begin{theorem}
Let $(M,g)$ be a Riemannian compact manifold of dimension $n\geq 5$. Suppose
that the operator $P_{g}$ is coercive and let $\left( u_{_{\sigma ,\mu
}}\right) _{\sigma ,\mu }$ be a sequence in $M_{\lambda }$ such that%
\begin{equation*}
\left\{ 
\begin{array}{c}
J_{\lambda ,\sigma ,\mu }(u_{_{\sigma ,\mu _{{}}}})\leq c_{\sigma ,\mu } \\ 
\nabla J_{\lambda }(u_{_{\sigma ,\mu }})-\mu _{_{\sigma ,\mu }}\nabla \Phi
_{\lambda }(u_{_{\sigma ,\mu }})\rightarrow 0%
\end{array}%
\right. \text{.}
\end{equation*}%
Suppose that%
\begin{equation*}
c_{\sigma ,\mu }<\frac{2}{n\text{ }K_{o}^{\frac{n}{4}}(f(x_{\circ }))^{\frac{%
n-4}{4}}}
\end{equation*}%
and 
\begin{equation*}
1+a^{-}\max \left( K(n,2,\sigma ),A\left( \varepsilon ,\sigma \right)
\right) +b^{-}\max \left( K(n,2,\mu ),A\left( \varepsilon ,\mu \right)
\right) >0
\end{equation*}%
then the equation%
\begin{equation*}
\Delta ^{2}u-\nabla ^{\mu }(\frac{a}{\rho ^{2}}\nabla _{\mu }u)+\frac{bu}{%
\rho ^{4}}=f\left\vert u\right\vert ^{N-2}u+\lambda \left\vert u\right\vert
^{q-2}u
\end{equation*}%
has at least two distinct non trivial solutions in distribution sense.
\end{theorem}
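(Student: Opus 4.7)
The strategy is to pass to the limit in the approximating family $u_{\sigma,\mu}$ as $\sigma \to 2$ and $\mu \to 4$, recovering two distinct solutions of the sharp equation from the two branches of the Nehari manifold $M_{\lambda}$ that underlie Theorem \ref{Th1}. The two quantitative hypotheses play complementary roles: the inequality $1+a^{-}\max(K(n,2,\sigma),A(\varepsilon,\sigma))+b^{-}\max(K(n,2,\mu),A(\varepsilon,\mu))>0$ ensures that the quadratic form associated with $P_{g}$ remains uniformly coercive on $H_{2}^{2}(M)$ when the singularities reach their sharp exponents, while the energy threshold $c_{\sigma,\mu}<\frac{2}{n\,K_{o}^{n/4}(f(x_{\circ}))^{(n-4)/4}}$ is precisely the Aubin-type critical level below which the Palais--Smale condition is restored for the critical biharmonic functional.

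First, combining the coercivity of $P_{g}$ with the Nehari constraint $\Phi_{\lambda}(u_{\sigma,\mu})=0$ and the auxiliary inequality above, I would show that the family $(u_{\sigma,\mu})$ is uniformly bounded in $H_{2}^{2}(M)$, with bound depending only on $\lambda$ and on $\sup_{\sigma,\mu}c_{\sigma,\mu}$. Extracting a subsequence, $u_{\sigma,\mu} \rightharpoonup u$ in $H_{2}^{2}(M)$, strongly in $L^{p}(M)$ for every $p<N$, and almost everywhere. The approximate critical point condition
\begin{equation*}
\nabla J_{\lambda}(u_{\sigma,\mu}) - \mu_{\sigma,\mu}\nabla \Phi_{\lambda}(u_{\sigma,\mu}) \to 0
\end{equation*}
transfers to the limit, using pointwise convergence together with the Hardy--Sobolev inequality of Lemma \ref{Lem} to pass through the singular linear terms $a\rho^{-2}|\nabla u|^{2}$ and $b\rho^{-4}u^{2}$; the Lagrange multipliers $\mu_{\sigma,\mu}$ are shown to vanish in the limit by the standard computation on Nehari manifolds. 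Hence $u$ is a distributional solution of the sharp equation.

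The main obstacle is the strong convergence, which I would tackle via concentration--compactness. Writing $w_{\sigma,\mu} = u_{\sigma,\mu}-u$, the Brezis--Lieb lemma applied to the Sobolev critical term, together with the already-established strong convergence of the singular lower-order terms, yields
\begin{equation*}
c_{\sigma,\mu}+o(1) = J_{\lambda,2,4}(u) + \frac{1}{2}\|\Delta w_{\sigma,\mu}\|_{2}^{2} - \frac{1}{N}\int_{M} f|w_{\sigma,\mu}|^{N}\,dv_{g}.
\end{equation*}
If $\|\Delta w_{\sigma,\mu}\|_{2} \not\to 0$, then the sharp form of the Sobolev embedding $H_{2}^{2}(M)\hookrightarrow L^{N}(M)$ concentrated near $x_{\circ}$ forces the residual energy to be at least $\frac{2}{n\,K_{o}^{n/4}(f(x_{\circ}))^{(n-4)/4}}$, contradicting the threshold assumption. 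Therefore $u_{\sigma,\mu} \to u$ strongly in $H_{2}^{2}(M)$, and the Nehari constraint $\Phi_{\lambda,2,4}(u)=0$ survives in the limit, which in particular guarantees that $u \neq 0$.

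The delicate step is the simultaneous control of the critical Sobolev loss of mass and of the Hardy defect at the sharp exponents $\sigma=2$, $\mu=4$, for which the auxiliary hypothesis is quantitatively needed. Finally, to obtain two distinct solutions, I would apply the limiting procedure to two Palais--Smale sequences lying respectively on the two complementary branches of $M_{\lambda}$, as produced in the proof of Theorem \ref{Th1} in the subcritical regime. Since strong $H_{2}^{2}$-convergence preserves the sign of the relevant second-order fibering derivative, the two limits lie on different branches and cannot coincide, producing the two distinct non-trivial distributional solutions asserted in the theorem.
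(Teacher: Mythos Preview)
Your proposal is correct and follows essentially the same route as the paper: uniform coercivity of $P_{g}$ as $(\sigma,\mu)\to(2,4)$ via the auxiliary hypothesis (the paper phrases this as $\liminf\Lambda_{\sigma,\mu}>0$ through an eigenfunction contradiction), boundedness of the approximating family, Br\'ezis--Lieb decomposition combined with the subcritical energy threshold to force strong $H_{2}^{2}$ convergence, and separate limits along the two Nehari branches to produce distinct solutions. The only place where the paper is slightly more explicit than your sketch is in passing the singular lower-order terms to the limit: since the compact embedding into $L^{2}(M,\rho^{-2})$ and $L^{2}(M,\rho^{-4})$ fails at the endpoint, the paper uses weak convergence in the weighted spaces together with dominated convergence for $\rho^{-\sigma_{m}}\to\rho^{-2}$ and $\rho^{-\mu_{m}}\to\rho^{-4}$ rather than the strong convergence you invoke.
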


We consider the energy functional $J_{\lambda }$ defined by for each $u\in
H_{2}^{2}(M)$ by%
\begin{equation*}
J_{\lambda }(u)=\frac{1}{2}\int_{M}\left( \left( \Delta _{g}u\right)
^{2}-a(x)\rho ^{-\sigma }\left\vert \nabla _{g}u\right\vert ^{2}+b(x)\rho
^{-\mu }u^{2}\right) dv(g)-\frac{\lambda }{q}\int_{M}\left\vert u\right\vert
^{q}dv(g)
\end{equation*}%
\begin{equation*}
-\frac{1}{N}\int_{M}f(x)\left\vert u\right\vert ^{N}dv(g)\text{.}
\end{equation*}%
Put 
\begin{equation*}
\Phi _{\lambda }(u)=\left\langle \nabla J_{\lambda }(u),\text{ }%
u\right\rangle
\end{equation*}%
\begin{equation*}
\Phi _{\lambda }(u)=\int_{M}\left( \left( \Delta _{g}u\right) ^{2}-a(x)\rho
^{-\sigma }\left\vert \nabla _{g}u\right\vert ^{2}+b(x)\rho ^{-\mu
}u^{2}\right) dv(g)-\lambda \int_{M}\left\vert u\right\vert ^{q}dv(g)
\end{equation*}%
\begin{equation*}
-\int_{M}f(x)\left\vert u\right\vert ^{N}dv(g)
\end{equation*}%
and 
\begin{equation*}
\left\langle \nabla \Phi _{\lambda }(u),u\right\rangle =2\int_{M}\left(
\left( \Delta _{g}u\right) ^{2}-a(x)\rho ^{-\sigma }\left\vert \nabla
_{g}u\right\vert ^{2}+b(x)\rho ^{-\mu }u^{2}\right) dv(g)-\lambda
q\int_{M}\left\vert u\right\vert ^{q}dv(g)
\end{equation*}%
\begin{equation*}
-\lambda q\int_{M}\left\vert u\right\vert ^{q}dv(g)-N\int_{M}f(x)\left\vert
u\right\vert ^{N}dv(g)\text{.}
\end{equation*}

It is well-known that the solutions of equation (\ref{3'}) are critical
points of the energy functional $J_{\lambda }$. The Nehari minimization
problem writes as follows%
\begin{equation*}
\alpha _{\lambda }=\inf \text{ }\left\{ J_{\lambda }(u):u\in N_{\lambda
}\right\} =\inf_{u\in N_{\lambda }}J_{\lambda }(u)
\end{equation*}%
where 
\begin{equation*}
N_{\lambda }=\left\{ u\in H_{2}^{2}(M)\backslash \left\{ 0\right\} :\text{ }%
\Phi _{\lambda }(u)=0\right\} \text{.}
\end{equation*}

Note that $N_{\lambda }$ contains every solution of\ equation (\ref{3'}).$%
\newline
$ $N_{\lambda }$ splits in three parts%
\begin{equation*}
N_{\lambda }^{+}=\left\{ u\in N_{\lambda }:\text{ }\left\langle \nabla \Phi
_{\lambda }(u),\text{ }u\right\rangle >0\right\}
\end{equation*}%
\begin{equation*}
N_{\lambda }^{-}=\left\{ u\in N_{\lambda }:\text{ }\left\langle \nabla \Phi
_{\lambda }(u),\text{ }u\right\rangle <0\right\}
\end{equation*}%
\begin{equation*}
N_{\lambda }^{0}=\left\{ u\in N_{\lambda }:\text{ }\left\langle \nabla \Phi
_{\lambda }(u),\text{ }u\right\rangle =0\right\} \text{.}
\end{equation*}%
Before stating our main result, we give some nice properties of $N_{\lambda
}^{+}$, $N_{\lambda }^{-}$ and $N_{\lambda }^{0}.\newline
$Let 
\begin{equation}
\lambda _{\circ }=\frac{\left( N-2\right) q\text{ }\Lambda ^{\frac{q}{2}}}{%
2\left( N-q\right) V(M)^{1-\frac{q}{N}}(\max (K_{\circ },A_{\varepsilon }))^{%
\frac{q}{2}}}  \label{3''}
\end{equation}%
The following lemma shows that the minimizers of $J_{\lambda }$ on $%
N_{\lambda }$ are usually critical points for $J_{\lambda }$.

\begin{lemma}
\label{lem1} Let $\lambda $ $\in (0,$ $\lambda _{\circ })$, if $v$ is a
local minimizer for $J_{\lambda }$on $N_{\lambda }$ and $v\notin N_{\lambda
}^{0}$, then $\nabla J_{\lambda }(v)=0$.
\end{lemma}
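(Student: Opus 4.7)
The plan is to invoke the Lagrange multiplier principle for the constrained problem, with the Nehari manifold $N_{\lambda}$ as the constraint set. The first step is to check the constraint qualification at $v$: the hypothesis $v\notin N_{\lambda}^{0}$ says exactly that $\langle \nabla \Phi _{\lambda }(v),v\rangle \neq 0$, and in particular $\nabla \Phi _{\lambda }(v)\neq 0$ in $H_{2}^{2}(M)^{\ast }$. Combined with $v\neq 0$, this lets the implicit function theorem realize $N_{\lambda }$ as a $C^{1}$ codimension-one submanifold of $H_{2}^{2}(M)$ in a neighborhood of $v$, with tangent space $\ker \nabla \Phi _{\lambda }(v)$.

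Since $v$ is a local minimizer of $J_{\lambda }$ on this submanifold, the standard Lagrange multiplier theorem produces $\theta \in \mathbb{R}$ such that $\nabla J_{\lambda }(v)=\theta \,\nabla \Phi _{\lambda }(v)$. The key step is to test this identity against $v$ itself. On the left one gets
\begin{equation*}
\langle \nabla J_{\lambda }(v),v\rangle =\Phi _{\lambda }(v)=0,
\end{equation*}
by the very definition of $\Phi _{\lambda }$ together with $v\in N_{\lambda }$. On the right one obtains $\theta \,\langle \nabla \Phi _{\lambda }(v),v\rangle $, whose scalar factor is nonzero precisely by $v\notin N_{\lambda }^{0}$. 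Hence $\theta =0$, and the Lagrange identity collapses to $\nabla J_{\lambda }(v)=0$, which is the conclusion.

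I do not expect a serious technical obstacle here: the only point to watch is that $N_{\lambda }$ is genuinely a smooth manifold at the point $v$, and this is already built into the hypothesis through the single scalar inequality $\langle \nabla \Phi _{\lambda }(v),v\rangle \neq 0$. Differentiability of $\Phi _{\lambda }$ and $J_{\lambda }$ at $v$ is standard because $1<q<2$ makes the map $u\mapsto \int_{M}|u|^{q}dv_{g}$ of class $C^{1}$ on $H_{2}^{2}(M)$. The restriction $\lambda <\lambda _{\circ }$ presumably intervenes elsewhere in the paper, securing for instance that $N_{\lambda }^{\pm}$ are both nonempty and quantitatively separated from $N_{\lambda }^{0}$ so that local minimizers satisfying $v\notin N_{\lambda }^{0}$ actually exist; it is not, however, needed in the Lagrange multiplier argument itself once the hypothesis $v\notin N_{\lambda }^{0}$ is granted.
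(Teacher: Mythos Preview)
Your proof is correct and follows essentially the same route as the paper: apply the Lagrange multiplier rule on $N_{\lambda}$, test the resulting identity $\nabla J_{\lambda}(v)=\theta\,\nabla\Phi_{\lambda}(v)$ against $v$, and use $v\in N_{\lambda}$ together with $v\notin N_{\lambda}^{0}$ to force $\theta=0$. Your additional remarks on the constraint qualification and on the role of $\lambda<\lambda_{\circ}$ are accurate refinements that the paper leaves implicit.
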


\begin{proof}
If\ $v$ is a local minimizer for $J_{\lambda }$on $N_{\lambda }$, then by
Lagrange multipliers' theorem, there is a real $\theta $ such that for any $%
\varphi \in H_{2}^{2}(M)$%
\begin{equation*}
\left\langle \nabla J_{\lambda }(v),\varphi \right\rangle =\theta
\left\langle \nabla \Phi _{\lambda }(v),\varphi \right\rangle
\end{equation*}%
If $\theta =0$, then the lemma is proved. If it is not the case we pick $%
\varphi =v$ and we use the assumption that $v\in N_{\lambda }$ to infer that%
\begin{equation*}
\left\langle \nabla J_{\lambda }(v),v\right\rangle =\theta \left\langle
\nabla \Phi _{\lambda }(v),v\right\rangle =0
\end{equation*}%
which contradicts that $v\notin N_{\lambda }^{0}$.
\end{proof}

Now we give some preparatory lemmas.

\begin{lemma}
\label{lem2} There is $\lambda _{1}>0$ such that for any $\lambda \in \left(
0,\lambda _{1}\right) $ the set $N_{\lambda }^{0}$ is empty .
\end{lemma}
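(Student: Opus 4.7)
The plan is to argue by contradiction. Assume $u\in N_{\lambda}^{0}$, so that $\Phi_{\lambda}(u)=0$ and $\langle\nabla\Phi_{\lambda}(u),u\rangle=0$ simultaneously. Abbreviating
\begin{equation*}
A(u):=\int_{M}\!\big((\Delta_{g}u)^{2}-a\rho^{-\sigma}|\nabla_{g}u|^{2}+b\rho^{-\mu}u^{2}\big)\,dv_{g},\quad B(u):=\int_{M}\!|u|^{q}\,dv_{g},\quad C(u):=\int_{M}\!f|u|^{N}\,dv_{g},
\end{equation*}
these two conditions read $A(u)=\lambda B(u)+C(u)$ and $2A(u)=\lambda q B(u)+NC(u)$. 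This is a linear system in $\lambda B(u)$ and $C(u)$; solving it produces the key identities
\begin{equation*}
A(u)=\tfrac{N-q}{N-2}\,\lambda B(u)=\tfrac{N-q}{2-q}\,C(u),
\end{equation*}
which will be the workhorse of the rest of the argument.

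I would then sandwich $A(u)$ between a $\lambda$-independent positive lower bound and a $\lambda$-dependent upper bound that vanishes with $\lambda$. For the lower bound, coercivity of $P_{g}$ supplies $\Lambda>0$ with $A(u)\ge\Lambda\|u\|_{H_{2}^{2}(M)}^{2}$, while the Sobolev embedding $H_{2}^{2}(M)\hookrightarrow L^{N}(M)$, written in the form $\|u\|_{N}^{2}\le\max(K_{o},A_{\varepsilon})\,\|u\|_{H_{2}^{2}(M)}^{2}$, combines with $C(u)\le(\max_{M}f)\|u\|_{N}^{N}$ and the identity $A(u)=\tfrac{N-q}{2-q}C(u)$ (dividing by $A(u)>0$, which is positive since $u\neq 0$ and the quadratic form is coercive) to yield
\begin{equation*}
A(u)^{(N-2)/2}\ge \tfrac{2-q}{N-q}\,\Lambda^{N/2}(\max_{M}f)^{-1}(\max(K_{o},A_{\varepsilon}))^{-N/2}=:c_{1}>0.
\end{equation*}

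For the upper bound I would apply H\"older's inequality $B(u)\le V(M)^{1-q/N}\|u\|_{N}^{q}$ together with the same Sobolev and coercivity estimates to get $B(u)\le V(M)^{1-q/N}(\max(K_{o},A_{\varepsilon}))^{q/2}\Lambda^{-q/2}A(u)^{q/2}$; substituting into $A(u)=\tfrac{N-q}{N-2}\lambda B(u)$ gives
\begin{equation*}
A(u)^{(2-q)/2}\le \tfrac{N-q}{N-2}\,V(M)^{1-q/N}(\max(K_{o},A_{\varepsilon}))^{q/2}\Lambda^{-q/2}\,\lambda=:c_{2}\,\lambda.
\end{equation*}
Comparing the two displayed bounds forces $\lambda\ge c_{1}^{(2-q)/(N-2)}/c_{2}$; I would declare $\lambda_{1}$ equal to that explicit positive threshold, and then for every $\lambda\in(0,\lambda_{1})$ no element of $N_{\lambda}^{0}$ can exist, i.e.\ $N_{\lambda}^{0}=\emptyset$.

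The main subtlety is structural rather than computational: the whole argument hinges on the fact that the sublinear term $\lambda B(u)$ (with $1<q<2$) scales with the \emph{opposite} power of $A(u)^{1/2}$ compared with the critical term $C(u)$, so that the Nehari relation together with its radial derivative can coexist only inside a narrow window of magnitudes for $u$, a window that retreats out of the admissible range once $\lambda$ is small. The one bookkeeping point to watch is that the Hardy--Sobolev contributions $K(n,2,\sigma)$ and $K(n,2,\mu)$ must be packaged into the single constant $\max(K_{o},A_{\varepsilon})$ used above, exactly as in the $\lambda_{\circ}$-formula of (3''), so that the same Sobolev constant controls both $B(u)$ and $C(u)$.
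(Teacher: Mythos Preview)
Your proof is correct and follows essentially the same approach as the paper: derive the two identities $A(u)=\tfrac{N-q}{2-q}C(u)$ and $A(u)=\tfrac{N-q}{N-2}\lambda B(u)$ from membership in $N_{\lambda}^{0}$, then use Sobolev together with coercivity to obtain a $\lambda$-independent lower bound and a $\lambda$-proportional upper bound on (a power of) $A(u)$, forcing a contradiction for small $\lambda$. The only cosmetic difference is that the paper packages the final comparison into an auxiliary functional $I_{\lambda'}$ and shows it is simultaneously zero and strictly positive, whereas you compare the two norm bounds directly; your version is arguably cleaner.
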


\begin{proof}
Suppose for every $\lambda >0$ there is $\lambda ^{\prime }\in \left(
0,\lambda \right) $ such that $N_{\lambda ^{\prime }}^{0}$ $\neq \emptyset $
and let $u\in N_{\lambda ^{\prime }}^{0}$ i.e.%
\begin{equation*}
\left\langle \nabla \Phi _{\lambda ^{\prime }}(u),u\right\rangle
=2\left\Vert u\right\Vert ^{2}-\lambda ^{\prime }q\left\Vert u\right\Vert
_{q}^{q}-N\int_{M}f(x)\left\vert u\right\vert ^{N}dv(g)=0
\end{equation*}%
and by the fact that 
\begin{equation*}
\Phi _{\lambda ^{\prime }}(u)=\left\Vert u\right\Vert ^{2}-\lambda ^{\prime
}\left\Vert u\right\Vert _{q}^{q}-\int_{M}f(x)\left\vert u\right\vert
^{N}dv(g)=0
\end{equation*}%
we get 
\begin{equation}
\left\Vert u\right\Vert ^{2}=\frac{N-q}{2-q}\int_{M}f(x)\left\vert
u\right\vert ^{N}dv(g)  \label{2}
\end{equation}%
and also 
\begin{equation}
\lambda ^{\prime }\left\Vert u\right\Vert _{q}^{q}=\frac{N-2}{2-q}%
\int_{M}f(x)\left\vert u\right\vert ^{N}dv(g)\text{.}  \label{3}
\end{equation}%
Independently by the Sobolev's inequality and the coerciveness of the
operator $P_{g}$ we obtain%
\begin{equation}
\int_{M}f(x)\left\vert u\right\vert ^{N}dv(g)\leq \Lambda ^{-\frac{N}{2}%
}(\max ((1+\varepsilon )K_{\circ },A_{\varepsilon }))^{\frac{N}{2}%
}\max_{x\in M}f(x)\left\Vert u\right\Vert ^{N}  \label{4}
\end{equation}%
where $\Lambda $ denotes a constant of the coercivity. From (\ref{2}) and (%
\ref{4}) we deduce that 
\begin{equation*}
\left\Vert u\right\Vert \geq \left[ \frac{\left( N-q\right) \Lambda ^{-\frac{%
N}{2}}\left( (\max ((1+\varepsilon )K_{\circ },A_{\varepsilon })\right) ^{%
\frac{N}{2}}\max_{x\in M}f(x)}{\left( 2-q\right) }\right] ^{\frac{1}{2-N}}
\end{equation*}%
Let the functional $I_{\lambda ^{\prime }}:N_{\lambda }\rightarrow 
\mathbb{R}
$ is given by 
\begin{equation*}
I_{\lambda ^{\prime }}(u)=\left[ \left( \frac{N-q}{2-q}\right) ^{\frac{q}{2}}%
\frac{2-q}{N-2}\right] ^{\frac{2}{2-q}}\left( \frac{\left\Vert u\right\Vert
^{q}}{\lambda ^{\prime }\left\Vert u\right\Vert _{q}^{q}}\right) ^{\frac{2}{%
q-2}}-\int_{M}f(x)\left\vert u\right\vert ^{N}dv(g).
\end{equation*}%
If $u\in N_{\lambda ^{\prime }}^{0}$, then (\ref{2}) and (\ref{3}) give \ \ 
\begin{equation*}
I_{\lambda ^{\prime }}(u)=\left[ \left( \frac{N-q}{2-q}\right) ^{\frac{q}{2}}%
\frac{2-q}{N-2}\right] ^{\frac{2}{2-q}}\left[ \frac{\left( \frac{N-q}{2-q}%
\int_{M}f(x)\left\vert u\right\vert ^{N}dv(g)\right) ^{\frac{q}{2}}}{\frac{%
N-2}{2-q}\int_{M}f(x)\left\vert u\right\vert ^{N}dv(g)}\right] ^{\frac{2}{q-2%
}}
\end{equation*}%
\begin{equation}
-\int_{M}f(x)\left\vert u\right\vert ^{N}dv(g)=0\text{.}  \label{6}
\end{equation}%
Putting 
\begin{equation*}
\theta =\left[ \left( \frac{N-q}{2-q}\right) ^{\frac{q}{2}}\frac{2-q}{N-2}%
\right] ^{\frac{2}{2-q}}
\end{equation*}%
and taking account of the coerciveness of the operator $P_{g}$ and the
Sobolev's inequality one get%
\begin{equation*}
I_{\lambda ^{\prime }}(u)\geq \theta \left( \frac{\left\Vert u\right\Vert
^{q}}{\lambda \frac{N-q}{Nq}\Lambda ^{-\frac{q}{2}}V(M)^{1-\frac{q}{N}}(\max
((1+\varepsilon )K_{\circ },A_{\varepsilon }))^{\frac{q}{2}}\left\Vert
u\right\Vert ^{q}}\right) ^{\frac{2}{q-2}}
\end{equation*}%
\begin{equation*}
-\Lambda ^{-\frac{N}{2}}(\max ((1+\varepsilon )K_{\circ },A_{\varepsilon
}))^{\frac{N}{2}}\max_{x\in M}f(x)\left\Vert u\right\Vert ^{N}\text{.}
\end{equation*}%
That is to say 
\begin{equation*}
I_{\lambda ^{\prime }}(u)\geq \left( \frac{\Lambda ^{\frac{q}{2}}\left( 
\frac{N-q}{2-q}\right) ^{\frac{q}{2}}\left( \frac{2-q}{N-2}\right) \left( 
\frac{Nq}{N-q}\right) }{\lambda ^{\prime }V(M)^{1-\frac{q}{N}}(\max
((1+\varepsilon )K_{\circ },A_{\varepsilon }))^{\frac{q}{2}}}\right) ^{\frac{%
2}{q-2}}
\end{equation*}%
\begin{equation*}
-\left( \left( \frac{N-q}{2-q}\right) \Lambda ^{-\frac{N}{2}}\left( (\max
((1+\varepsilon )K_{\circ },A_{\varepsilon })\right) ^{\frac{N}{2}%
}\max_{x\in M}f(x)\right) ^{\frac{2}{2-N}}\text{.}
\end{equation*}%
Hence, if $\lambda $ is sufficiently small, so as $\lambda ^{\prime }>0$ and 
$I_{\lambda ^{\prime }}(u)>0$ for all $u\in N_{\lambda ^{\prime }}^{0}$.
This contradicts (\ref{6}). So there is $\lambda _{1}>0$, such that for any $%
\lambda \in (0,$ $\lambda _{1})$, \ the set $N_{\lambda }^{0}=\emptyset .$
\end{proof}

From Lemma \ref{lem2}, $N_{\lambda }$ splits as $N_{\lambda }=N_{\lambda
}^{+}\cup N_{\lambda }^{-}$ where $0<\lambda <\lambda _{1}$. We define 
\begin{equation*}
\alpha _{\lambda }=\inf_{u\in N_{\lambda }}J_{\lambda }(u)\text{, \ }\alpha
_{\lambda }^{+}=\inf_{u\in N_{\lambda }^{+}}J_{\lambda }(u)\text{ \ and \ }%
\alpha _{\lambda }^{-}=\inf_{u\in N_{\lambda }^{-}}J_{\lambda }(u)
\end{equation*}

\begin{lemma}
\label{lem3} For each $\lambda $ $\in (0,$ $\lambda _{\circ }),$ the
functional $J_{\lambda }$ is bounded from below on $N_{\lambda }$.
\end{lemma}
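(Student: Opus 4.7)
The plan is to exploit the Nehari constraint $\Phi_{\lambda}(u)=0$ in order to eliminate the critical $L^{N}$ term from $J_{\lambda}$ on $N_{\lambda}$, and then reduce the problem to the elementary fact that a function of the form $At^{2}-\lambda B t^{q}$ with $1<q<2$ is bounded from below on $[0,+\infty)$.

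First, for any $u\in N_{\lambda}$, the identity $\Phi_{\lambda}(u)=0$ yields $\int_{M}f|u|^{N}\,dv_{g}=\|u\|^{2}-\lambda\|u\|_{q}^{q}$, where (as in the proof of Lemma \ref{lem2}) $\|u\|^{2}$ denotes the quadratic form $\int_{M}\bigl((\Delta_{g}u)^{2}-a\rho^{-\sigma}|\nabla_{g}u|^{2}+b\rho^{-\mu}u^{2}\bigr)\,dv_{g}$. Substituting this into the definition of $J_{\lambda}$ collapses the three-term functional into the two-term expression
$$J_{\lambda}(u)=\frac{N-2}{2N}\,\|u\|^{2}-\lambda\,\frac{N-q}{qN}\,\|u\|_{q}^{q}.$$

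Next I would control $\|u\|_{q}^{q}$ by $\|u\|^{q}$. The route is first H\"older, yielding $\|u\|_{q}\leq V(M)^{1/q-1/N}\|u\|_{N}$, followed by exactly the Sobolev plus coercivity chain invoked to obtain (\ref{4}) in the proof of Lemma \ref{lem2}, which gives $\|u\|_{N}\leq \Lambda^{-1/2}\bigl(\max((1+\varepsilon)K_{\circ},A_{\varepsilon})\bigr)^{1/2}\|u\|$. Concatenating produces $\|u\|_{q}^{q}\leq C_{0}\|u\|^{q}$, with $C_{0}=V(M)^{1-q/N}\Lambda^{-q/2}\bigl(\max((1+\varepsilon)K_{\circ},A_{\varepsilon})\bigr)^{q/2}$, namely the constant that appears in the denominator of $\lambda_{\circ}$ in (\ref{3''}). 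Writing $F(t):=\frac{N-2}{2N}t^{2}-\lambda\,\frac{N-q}{qN}C_{0}\,t^{q}$, one obtains $J_{\lambda}(u)\geq F(\|u\|)$ for every $u\in N_{\lambda}$.

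The function $F$ is continuous on $[0,+\infty)$ with $F(0)=0$ and $F(t)\to +\infty$ as $t\to\infty$; since $1<q<2$, an elementary calculus computation locates a unique positive minimizer $t_{\ast}$ and a finite value $F(t_{\ast})>-\infty$, furnishing the desired uniform lower bound for $J_{\lambda}$ on $N_{\lambda}$. I do not foresee a serious obstacle. In fact boundedness from below holds for every $\lambda>0$; the restriction $\lambda<\lambda_{\circ}$ is included only to keep this threshold consistent with the role it plays elsewhere, in particular in separating the infima on $N_{\lambda}^{+}$ and $N_{\lambda}^{-}$. The only care needed is to track constants so that the expression for $C_{0}$ matches the form of $\lambda_{\circ}$ used throughout the paper.
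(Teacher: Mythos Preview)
Your proposal is correct and follows essentially the same route as the paper: substitute the Nehari constraint to obtain $J_{\lambda}(u)=\frac{N-2}{2N}\|u\|^{2}-\lambda\frac{N-q}{qN}\|u\|_{q}^{q}$, then control $\|u\|_{q}^{q}$ by $\|u\|^{q}$ via H\"older, Sobolev, and coercivity. The only cosmetic difference is that the paper splits into the cases $\|u\|\geq 1$ and $\|u\|<1$ to read off explicit lower bounds, whereas you invoke directly that $t\mapsto At^{2}-Bt^{q}$ is bounded below on $[0,\infty)$; your observation that this step does not actually require $\lambda<\lambda_{\circ}$ is also accurate.
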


\begin{proof}
If $u\in N_{\lambda },$then by equality (\ref{2}) and the Sobolev's
inequality, we deduce that 
\begin{equation*}
J_{\lambda }(u)\geq \frac{N-2}{2N}\left\Vert u\right\Vert ^{2}-\lambda \frac{%
N-q}{Nq}V(M)^{1-\frac{q}{N}}(\max ((1+\varepsilon )K_{\circ },A_{\varepsilon
}))^{\frac{q}{2}}\left\Vert u\right\Vert _{H_{2}^{2}(M)}^{q}
\end{equation*}%
and taking account of the coerciveness of the operator $P_{g}$, we infer
that 
\begin{equation*}
J_{\lambda }(u)\geq \frac{N-2}{2N}\left\Vert u\right\Vert ^{2}-\lambda \frac{%
N-q}{Nq}\Lambda ^{-\frac{q}{2}}V(M)^{1-\frac{q}{N}}(\max ((1+\varepsilon
)K_{\circ },A_{\varepsilon }))^{\frac{q}{2}}\left\Vert u\right\Vert ^{q}
\end{equation*}%
where $\Lambda $ is a constant of coercivity.

If $u\in N_{\lambda }$ and $\left\Vert u\right\Vert \geq 1$, 
\begin{equation*}
J_{\lambda }(u)\geq \left[ \frac{N-2}{2N}-\lambda \frac{N-q}{Nq}\Lambda ^{-%
\frac{q}{2}}V(M)^{1-\frac{q}{N}}(\max ((1+\varepsilon )K_{\circ
},A_{\varepsilon }))^{\frac{q}{2}}\right] \left\Vert u\right\Vert ^{q}
\end{equation*}%
So, if 
\begin{equation*}
0<\lambda <\frac{\left( N-2\right) q\text{ }\Lambda ^{\frac{q}{2}}}{2\left(
N-q\right) V(M)^{1-\frac{q}{N}}(\max (K_{\circ },A_{\varepsilon }))^{\frac{q%
}{2}}}:=\lambda _{\circ }
\end{equation*}%
then%
\begin{equation*}
J_{\lambda }(u)>0
\end{equation*}%
If $u\in N_{\lambda }$ with$\left\Vert u\right\Vert <1$, we have%
\begin{equation*}
J_{\lambda }(u)>-\lambda \frac{N-q}{Nq}\Lambda ^{-\frac{q}{2}}V(M)^{1-\frac{q%
}{N}}(\max ((1+\varepsilon )K_{\circ },A_{\varepsilon }))^{\frac{q}{2}}\text{%
.}
\end{equation*}%
Thus $J_{\lambda }$ is bounded below on $N_{\lambda }$ .
\end{proof}

As a consequence of Lemma \ref{lem1} we have

\begin{lemma}
\label{lem4} If $\ \lambda \in \left( 0,\lambda _{\circ }\right) $, we have
\end{lemma}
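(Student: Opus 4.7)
The natural content of Lemma \ref{lem4}, given the reference to Lemma \ref{lem1}, is either the Nehari bound $\alpha_\lambda = \alpha_\lambda^+ < 0$ or the corollary statement that any local minimizer of $J_\lambda$ on $N_\lambda$ is a critical point of $J_\lambda$. My plan addresses the former, which is the substantive version; the latter would follow immediately from Lemma \ref{lem1} and Lemma \ref{lem2}, since for $\lambda < \min(\lambda_\circ,\lambda_1)$ one has $N_\lambda^0 = \emptyset$ and the side condition $v \notin N_\lambda^0$ in Lemma \ref{lem1} becomes automatic.

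For the substantive version, my plan is algebraic: on $N_\lambda^+$ exploit simultaneously the identity $\Phi_\lambda(u)=0$ and the strict inequality $\langle \nabla \Phi_\lambda(u),u\rangle > 0$, eliminate the Sobolev-critical term $\int_M f|u|^N\,dv_g$ between them, and substitute back into $J_\lambda$. Explicitly, forming the linear combination $\langle \nabla \Phi_\lambda(u),u\rangle - N\,\Phi_\lambda(u) > 0$ gives
$$(2-N)\,\|u\|^{2} + \lambda(N-q)\,\|u\|_{q}^{q} \;>\; 0,$$
i.e.\ $\lambda\|u\|_q^q > \tfrac{N-2}{N-q}\|u\|^{2}$ for every $u \in N_\lambda^+$. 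Using $\Phi_\lambda(u)=0$ to replace $\int_M f|u|^N\,dv_g$ by $\|u\|^{2}-\lambda\|u\|_{q}^{q}$ in the definition of $J_\lambda$, I rewrite
$$J_\lambda(u) \;=\; \frac{N-2}{2N}\,\|u\|^{2} \;-\; \lambda\,\frac{N-q}{Nq}\,\|u\|_{q}^{q},$$
and plugging in the lower bound yields $J_\lambda(u) < \tfrac{(N-2)(q-2)}{2Nq}\,\|u\|^{2} < 0$, since $1<q<2$.

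To upgrade this pointwise estimate to the strict infimum bound $\alpha_\lambda^+ < 0$, I would test on a scalar multiple of a fixed nonzero $u_0 \in H_2^2(M)$: the fibering map $\psi(t):=J_\lambda(tu_0)$ satisfies $\psi(t) \sim -\tfrac{\lambda}{q}\,t^{q}\,\|u_0\|_{q}^{q}$ as $t\to 0^{+}$ and $\psi(t)\to+\infty$ as $t\to \infty$ (via the coercivity of $P_g$ together with the subcritical cubic-like behaviour dominated by the $\lambda<\lambda_\circ$ regime), so $\psi$ attains a local minimum at some $t^{+}>0$, and the sign of $\psi''(t^{+})$ places $t^{+}u_0 \in N_\lambda^+$ with $J_\lambda(t^{+}u_0)<0$.

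The only genuine obstacle is making sure the critical point $t^+$ produced by the fibering analysis lands in $N_\lambda^+$ rather than in $N_\lambda^0$ (or $N_\lambda^-$), so that the pointwise bound applies; once $\lambda < \min(\lambda_\circ,\lambda_1)$, Lemma \ref{lem2} excludes $N_\lambda^0$ altogether, and the asymptotic behaviour of $\psi$ near $0$ forces a genuine interior minimum, which sits in $N_\lambda^+$ by definition. The rest of the argument is direct manipulation of the Nehari constraints.
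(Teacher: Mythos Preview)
Your core argument is exactly the paper's: on $N_\lambda^+$ one uses $\Phi_\lambda(u)=0$ to rewrite $J_\lambda(u)=\tfrac{N-2}{2N}\|u\|^2-\tfrac{\lambda(N-q)}{Nq}\|u\|_q^q$, then combines $\langle\nabla\Phi_\lambda(u),u\rangle>0$ with $\Phi_\lambda(u)=0$ to get $(N-2)\|u\|^2<\lambda(N-q)\|u\|_q^q$, and substitutes to obtain $J_\lambda(u)<0$. The paper stops there; it does not verify $N_\lambda^+\neq\emptyset$ inside this proof but establishes it separately via the same fibering-map analysis you sketch (the function $E(t)$ after Lemma~\ref{lem5}).

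One correction to your fibering sketch: you claim $\psi(t)=J_\lambda(tu_0)\to+\infty$ as $t\to\infty$, but in fact $\psi(t)\to-\infty$ because the dominant term is $-\tfrac{t^N}{N}\int_M f|u_0|^N$ with $N>2$ and $f>0$; coercivity of $P_g$ only controls the quadratic part. This does not invalidate the existence of the local minimum $t^+$---the correct picture is $\psi(0)=0$, $\psi(t)<0$ for small $t$, then $\psi$ rises to a positive local maximum (at $t^-$) before plunging to $-\infty$, so the local minimum $t^+\in(0,t^-)$ still exists and $t^+u_0\in N_\lambda^+$---but your stated reason for it is wrong.
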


\begin{equation*}
\alpha _{\lambda }^{+}=\inf_{u\in N_{\lambda }^{+}}J_{\lambda }(u)<0.
\end{equation*}

\begin{proof}
If $u\in N_{\lambda }^{+}$, then 
\begin{equation*}
J_{\lambda }(u)=\frac{N-2}{2N}\left\Vert u\right\Vert ^{2}-\frac{\lambda
(N-q)}{Nq}\left\Vert u\right\Vert _{q}^{q}
\end{equation*}%
and since%
\begin{equation*}
\left\langle \nabla \Phi _{\lambda }(u),u\right\rangle =2\left\Vert
u\right\Vert ^{2}-\lambda q\left\Vert u\right\Vert
_{q}^{q}-N\int_{M}f(x)\left\vert u\right\vert ^{N}dv(g)>0
\end{equation*}%
we get%
\begin{equation*}
J_{\lambda }(u)\leq \frac{\lambda (N-q)}{N}\left( \frac{1}{2}-\frac{1}{q}%
\right) \left\Vert u\right\Vert _{q}^{q}<0
\end{equation*}%
i.e.%
\begin{equation*}
\inf_{u\in N_{\lambda }^{+}}J_{\lambda }(u)<0\text{.}
\end{equation*}
\end{proof}

\begin{lemma}
\label{lem5} For every $\lambda \in \left( 0,\min (\lambda _{0},\lambda
_{1})\right) $, 
\begin{equation*}
\alpha _{\lambda }^{-}=\inf_{u\in N_{\lambda }^{-}}J_{\lambda }(u)>0\text{.}
\end{equation*}
\end{lemma}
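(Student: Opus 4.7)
The plan is a two-step argument: first show that $\|u\|$ admits a uniform positive lower bound on $N_\lambda^-$ which is independent of both $u$ and $\lambda$; then combine this with the identity
$$J_\lambda(u) = \frac{N-2}{2N}\|u\|^2 - \frac{\lambda(N-q)}{Nq}\|u\|_q^q,$$
valid on $N_\lambda$ and already used in the proof of Lemma \ref{lem4}, to extract a strictly positive lower bound.

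For the first step, I would exploit both constraints defining $N_\lambda^-$. From $\Phi_\lambda(u)=0$ one has $\lambda\|u\|_q^q = \|u\|^2 - \int_M f(x)|u|^N\,dv(g)$, and substituting this into $\langle\nabla\Phi_\lambda(u),u\rangle<0$ gives
$$(2-q)\|u\|^2 < (N-q)\int_M f(x)|u|^N\,dv(g).$$
Applying Sobolev's inequality together with the coerciveness of $P_g$, exactly as in (\ref{4}) from the proof of Lemma \ref{lem2}, one controls $\int_M f|u|^N\,dv(g) \leq C\|u\|^N$ for an explicit constant $C$ depending only on $n$, $\Lambda$, $\varepsilon$, $K_\circ$, $A_\varepsilon$ and $\max_M f$. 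Combining these two inequalities yields a lower bound $\|u\|\geq A>0$ which is uniform over $N_\lambda^-$ and independent of $\lambda$.

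For the second step, Hölder and Sobolev (as in Lemma \ref{lem3}) give $\|u\|_q^q \leq D\|u\|^q$ with an explicit constant $D$, whence
$$J_\lambda(u) \geq \|u\|^q\left(\frac{N-2}{2N}\|u\|^{2-q} - \frac{\lambda(N-q)D}{Nq}\right).$$
Plugging in $\|u\|\geq A$, the bracket is bounded below by a strictly positive constant provided $\lambda$ is smaller than an explicit threshold proportional to $A^{2-q}$. Since $\lambda_\circ$ defined in (\ref{3''}) is precisely this threshold in the case $A\geq 1$, one obtains $\alpha_\lambda^->0$ for $\lambda\in(0,\min(\lambda_\circ,\lambda_1))$, possibly shrinking $\lambda_\circ$ by the factor $A^{2-q}$ if $A<1$. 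The main technical point is to verify that the constant $A$ produced in Step 1 is genuinely independent of $\lambda$: this independence is precisely what prevents the infimum from degenerating to $0$ and makes Step 2 work with a single $\lambda$-threshold.
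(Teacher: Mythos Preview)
Your approach is correct and follows essentially the same route as the paper's proof: both derive the key inequality $(2-q)\|u\|^2 < (N-q)\int_M f|u|^N\,dv(g)$ from the two constraints, apply Sobolev plus coercivity to obtain a uniform, $\lambda$-independent lower bound on $\|u\|$ (the paper calls it $\xi$, you call it $A$), and then insert this into the expression $J_\lambda(u)=\frac{N-2}{2N}\|u\|^2-\frac{\lambda(N-q)}{Nq}\|u\|_q^q$ to conclude positivity for small $\lambda$. The only cosmetic difference is that the paper splits into the cases $\|u\|\geq 1$ and $\|u\|<1$ while you treat both uniformly via the bound $\|u\|\geq A$; your threshold adjustment by the factor $A^{2-q}$ corresponds exactly to the paper's auxiliary constant $\lambda_2$.
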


\begin{proof}
If $u\in N_{\lambda }^{-}$, then 
\begin{equation*}
J_{\lambda }(u)=\frac{N-2}{2N}\left\Vert u\right\Vert ^{2}-\frac{\lambda
(N-q)}{Nq}\left\Vert u\right\Vert _{q}^{q}
\end{equation*}%
and since%
\begin{equation}
\left\langle \nabla \Phi _{\lambda }(u),u\right\rangle =2\left\Vert
u\right\Vert ^{2}-\lambda q\left\Vert u\right\Vert
_{q}^{q}-N\int_{M}f(x)\left\vert u\right\vert ^{N}dv(g)<0  \label{7}
\end{equation}%
we infer that%
\begin{equation}
\left\Vert u\right\Vert ^{2}>\frac{\lambda \left( N-q\right) }{\left(
N-2\right) }\left\Vert u\right\Vert _{q}^{q}\text{.}  \label{8}
\end{equation}%
By Sobolev's inequality and from the coerciveness of the operator $P_{g}$,
there exists a constant $\Lambda >0$, such that%
\begin{equation*}
J_{\lambda }(u)\geq \frac{N-2}{2N}\left\Vert u\right\Vert ^{2}-\lambda \frac{%
N-q}{Nq}\Lambda ^{-\frac{q}{2}}V(M)^{1-\frac{q}{N}}(\max ((1+\varepsilon
)K_{\circ },A_{\varepsilon }))^{\frac{q}{2}}\left\Vert u\right\Vert ^{q}%
\text{.}
\end{equation*}%
So if $u\in N_{\lambda }^{-}$ and $\left\Vert u\right\Vert \geq 1$, 
\begin{equation}
J_{\lambda }(u)\geq \left[ \frac{N-2}{2N}-\lambda \frac{N-q}{Nq}\Lambda ^{-%
\frac{q}{2}}V(M)^{1-\frac{q}{N}}(\max ((1+\varepsilon )K_{\circ
},A_{\varepsilon }))^{\frac{q}{2}}\right] \left\Vert u\right\Vert ^{q}
\label{8'}
\end{equation}%
hence if 
\begin{equation*}
0<\lambda <\frac{\left( N-2\right) q\text{ }\Lambda ^{\frac{q}{2}}}{2\left(
N-q\right) V(M)^{1-\frac{q}{N}}(\max ((1+\varepsilon )K_{\circ
},A_{\varepsilon }))^{\frac{q}{2}}}=\lambda _{\circ }
\end{equation*}%
then 
\begin{equation*}
J_{\lambda }(u)>0
\end{equation*}%
In case $u\in N_{\lambda }^{-}$ and $\left\Vert u\right\Vert <1$, by
Sobolev's inequality, the inequality (\ref{7}) and the coerciveness of the
operator $P_{g}$, we obtain%
\begin{equation*}
0<\xi \leq \left\Vert u\right\Vert <1
\end{equation*}%
where%
\begin{equation*}
\xi =\left[ \frac{\left( 2-q\right) \Lambda ^{\frac{N}{2}}(\max
((1+\varepsilon )K_{\circ },A_{\varepsilon }))^{-\frac{N}{2}}}{\left(
N-q\right) \max_{x\in M}f(x)}\right] ^{\frac{1}{N-2}}
\end{equation*}%
and $\Lambda $ is a constant of coerciveness.

The inequality (\ref{8'}) becomes%
\begin{equation*}
J_{\lambda }(u)\geq \frac{N-2}{2N}\xi ^{2}-\lambda \frac{N-q}{Nq}\Lambda ^{-%
\frac{q}{2}}V(M)^{1-\frac{q}{N}}(\max ((1+\varepsilon )K_{\circ
},A_{\varepsilon }))^{\frac{q}{2}}
\end{equation*}%
Hence, if we take 
\begin{equation}
0<\lambda <\frac{\frac{\left( N-2\right) }{2\left( N-q\right) }\xi
^{2}\Lambda ^{\frac{q}{2}}}{V(M)^{1-\frac{q}{N}}(\max (K_{\circ
},A_{\varepsilon }))^{\frac{q}{2}}}=\lambda _{2}  \label{8"}
\end{equation}%
then if $\ \lambda \in \left( 0,\min (\lambda _{0},\lambda _{1},\lambda
_{2})\right) $ we obtain 
\begin{equation*}
J_{\lambda }(u)\geq C>0
\end{equation*}%
where $C$ is constant depending on $N$, $\Lambda $, $V(M)$, $K_{o}$ and $%
A_{\varepsilon }$. So 
\begin{equation*}
\inf_{u\in N_{\lambda }^{-}}J_{\lambda }(u)>0\text{.}
\end{equation*}
\end{proof}

For each $u\in H_{2}-\left\{ 0\right\} $, define%
\begin{equation*}
E(t)=t^{2-q}\left\Vert u\right\Vert ^{2}-t^{N-q}\int_{M}f\left\vert
u\right\vert ^{N}dv_{g}
\end{equation*}%
so $E(0)=0$ and $E(t)$ goes to $-\infty $ as $t\rightarrow +\infty $. Also
for $t>0$, we have 
\begin{equation*}
E^{\prime }(t)=\left( 2-q\right) t^{1-q}\left\Vert u\right\Vert
^{2}-(N-q)t^{N-q-1}\int_{M}f\left\vert u\right\vert ^{N}dv_{g}
\end{equation*}%
and $E^{\prime }(t)=0$ at 
\begin{equation*}
t_{o}=\left( \frac{2-q}{N-q}\right) ^{\frac{1}{N-2}}\left( \frac{\left\Vert
u\right\Vert ^{2}}{\int_{M}f\left\vert u\right\vert ^{N}dv_{g}}\right) ^{%
\frac{1}{N-2}}\text{.}
\end{equation*}%
Hence $E(t)$ achieves its maximum at $t_{o\text{ }}$and it is increasing on $%
\left[ 0,t_{o}\right) $ and decreasing on $\left[ t_{o}\text{, }+\infty
\right) $.

Evaluating the function $E$ at $t_{o}$, 
\begin{equation*}
E(t_{o})=\left( \frac{2-q}{N-q}\right) ^{\frac{2-q}{N-2}}\left( \frac{%
\left\Vert u\right\Vert ^{2}}{\int_{M}f\left\vert u\right\vert ^{N}dv_{g}}%
\right) ^{\frac{2-q}{N-q}}\left\Vert u\right\Vert ^{2}
\end{equation*}%
\begin{equation*}
-\left( \frac{2-q}{N-q}\right) ^{\frac{N-q}{N-2}}\left( \frac{\left\Vert
u\right\Vert ^{2}}{\int_{M}f\left\vert u\right\vert ^{N}dv_{g}}\right) ^{^{%
\frac{N-q}{N-2}}}\int_{M}f\left\vert u\right\vert ^{N}dv_{g}
\end{equation*}%
\begin{equation*}
=\frac{N-2}{N-q}\left( \frac{2-q}{N-q}\right) ^{\frac{2-q}{N-2}}\frac{%
\left\Vert u\right\Vert ^{\frac{2(N-q)}{N-2}}}{\left( \int_{M}f\left\vert
u\right\vert ^{N}dv_{g}\right) ^{\frac{2-q}{N-2}}}\text{.}
\end{equation*}%
By the Sobolev's inequality we get for any $\epsilon >0,$%
\begin{equation*}
\int_{M}f\left\vert u\right\vert ^{N}dv_{g}\leq \left\Vert f\right\Vert
_{\infty }\left( \left( K_{o}^{2}+\epsilon \right) \left\Vert \Delta
u\right\Vert _{2}^{2}+A\left( \epsilon \right) \left\Vert u\right\Vert
_{2}^{2}\right) ^{\frac{N}{2}}
\end{equation*}%
\begin{equation*}
\leq \left\Vert f\right\Vert _{\infty }\max \left( K_{o}^{2}+\epsilon
,A\left( \epsilon \right) \right) ^{\frac{N}{2}}\left\Vert u\right\Vert
_{H_{2}^{2}}^{N}
\end{equation*}%
\begin{equation*}
\leq \Lambda ^{-\frac{N}{2}}\left\Vert f\right\Vert _{\infty }\max \left(
K_{o}^{2}+\epsilon ,A\left( \epsilon \right) \right) ^{\frac{N}{2}%
}\left\Vert u\right\Vert ^{N}
\end{equation*}%
\begin{equation*}
=C^{\frac{N}{2}}\left\Vert f\right\Vert _{\infty }\left\Vert u\right\Vert
^{N}
\end{equation*}%
where $\Lambda $ is the constant of the coercivity, $K_{o}$ the best
constant in the Sobolev's inequality and $A\left( \epsilon \right) $ the
correspondent constant, $\left\Vert f\right\Vert _{\infty }=\sup_{x\in
M}\left\vert f(x)\right\vert $ and $C=\Lambda ^{-1}\max \left(
K_{o}^{2}+\epsilon ,A\left( \epsilon \right) \right) $.

Consequently%
\begin{equation}
E\left( t_{o}\right) \geq \frac{N-2}{N-q}\left( \frac{2-q}{N-q}\right) ^{%
\frac{2-q}{N-q}}C^{\frac{N(q-2)}{2\left( N-2\right) }}\left\Vert
f\right\Vert _{\infty }\left\Vert u\right\Vert ^{q}\text{.}  \label{8''}
\end{equation}%
Independently and in the same way as above we get%
\begin{equation}
\left\Vert u\right\Vert _{q}^{q}\leq \Lambda ^{-\frac{q}{2}}vol(M)^{1-\frac{q%
}{N}}C^{\frac{q}{2}}\left\Vert u\right\Vert ^{q}\text{.}  \label{8'''}
\end{equation}%
Hence 
\begin{equation*}
E(0)=0<\lambda \left\Vert u\right\Vert _{q}^{q}\leq E(t_{o})
\end{equation*}%
provided that%
\begin{equation*}
\lambda \leq \frac{\frac{N-2}{N-q}\left( \frac{2-q}{N-q}\right) ^{\frac{2-q}{%
N-q}}\left\Vert f\right\Vert _{\infty }}{vol(M)^{1-\frac{q}{2}}C^{\frac{N-q}{%
N-2}}}\text{.}
\end{equation*}%
Consequently by the nature of the function $E(t)$ we infer the existence of $%
t^{-}$, $t^{+}$ with $0<t^{+}<t^{o}<t^{-}$ such that 
\begin{equation}
\lambda \left\Vert u\right\Vert _{q}^{q}=E(t^{+})=E(t^{-})\text{.}
\label{8''''}
\end{equation}%
and%
\begin{equation*}
E^{\prime }(t^{+})>0>E^{\prime }(t^{-})
\end{equation*}%
Now we evaluate $\Phi _{\lambda }$ at $t^{-}u$ and at $t^{+}u$ to get%
\begin{equation*}
\Phi _{\lambda }(t^{-}u)=\left\langle \nabla J_{\lambda }\left(
t^{-}u\right) ,t^{-}u\right\rangle
\end{equation*}%
\begin{equation*}
=\left( t^{-}\right) ^{2}\left\Vert u\right\Vert ^{2}-\left( t^{-}\right)
^{N}\int_{M}f\left\vert u\right\vert ^{N}dv_{g}-\lambda \left( t^{-}\right)
^{q}\left\Vert u\right\Vert _{q}^{q}
\end{equation*}%
\begin{equation*}
=\left( t^{-}\right) ^{q}\left( \left( t^{-}\right) ^{2-q}\left\Vert
u\right\Vert ^{2}-\left( t^{-}\right) ^{N-q}\int_{M}f\left\vert u\right\vert
^{N}dv_{g}-\lambda \left\Vert u\right\Vert _{q}^{q}\right)
\end{equation*}%
and by (\ref{8''''}) we deduce that 
\begin{equation*}
\Phi _{\lambda }(t^{-}u)=0
\end{equation*}%
and also we get%
\begin{equation*}
\Phi _{\lambda }(t^{+}u)=0\text{.}
\end{equation*}%
Moreover, we have%
\begin{equation*}
\left\langle \nabla \Phi _{\lambda }(t^{-}u),t^{-}u\right\rangle =2\left(
t^{-}\right) ^{2}\left\Vert u\right\Vert ^{2}-N\left( t^{-}\right)
^{N}\int_{M}f\left\vert u\right\vert ^{N}dv_{g}-q\left( t^{-}\right)
^{q}\lambda \left\Vert u\right\Vert _{q}^{q}
\end{equation*}%
and taking account of (\ref{8''''}) we infer that%
\begin{equation*}
\left\langle \nabla \Phi _{\lambda }(t^{-}u),t^{-}u\right\rangle =\left(
2-q\right) \left( t^{-}\right) ^{2}\left\Vert u\right\Vert ^{2}-\left(
N-q\right) \left( t^{-}\right) ^{N}\int_{M}f\left\vert u\right\vert
^{N}dv_{g}
\end{equation*}%
and again by (\ref{8''''}) we obtain%
\begin{equation*}
\left\langle \nabla \Phi _{\lambda }(t^{-}u),t^{-}u\right\rangle =\left(
t^{-}\right) ^{1+q}\left( \left( 2-q\right) \left( t^{-}\right)
^{1-q}\left\Vert u\right\Vert ^{2}-\left( N-q\right) \left( t^{-}\right)
^{N-q-1}\int_{M}f\left\vert u\right\vert ^{N}dv_{g}\right)
\end{equation*}%
\begin{equation*}
=\left( t^{-}\right) ^{1+q}E^{\prime }\left( t^{-}\right) <0
\end{equation*}%
that means that $t^{-}u\in N_{\lambda }^{-}$. By similar procedure we get
also $t^{+}u\in N_{\lambda }^{+}$.

\section{Existence of a local minimizer for $J_{\protect\lambda }$ on $N_{%
\protect\lambda }^{+}$ and $N_{\protect\lambda }^{-}$}

In this section we focus on the existence of a local minimum of $J_{\lambda
} $ on $N_{\lambda }^{+}$ and $N_{\lambda }^{-}$ to do so we will be in need
of the following Hardy-Sobolev inequality and Releich-Kondrakov embedding
respectively whose proofs are given in (\cite{6}). The weighted $L^{p}\left(
M,\rho ^{\gamma }\right) $ space will be the set of measurable functions $u$
on $M$ such that $\rho ^{\gamma }\left\vert u\right\vert ^{p}$ are
integrable where $p\geq 1$ and $\gamma $ are real numbers. We endow $%
L^{p}\left( M,\rho ^{\gamma }\right) $ with the norm

\begin{equation*}
\left\Vert u\right\Vert _{p,\rho }=\left( \int_{M}\rho ^{\gamma }\left\vert
u\right\vert ^{p}dv_{g}\right) ^{\frac{1}{p}}\text{.}
\end{equation*}

\begin{theorem}
\label{th1} Let $(M,g)$ be a Riemannian compact manifold of dimension $n\geq
5$ and $p$, $q$ , $\gamma $ are real numbers such that $\frac{\gamma }{p}=%
\frac{n}{q}-\frac{n}{p}-2$ $\ $and $2\leq p\leq \frac{2n}{n-4}$.$\newline
$For any $\ \epsilon >0$, there is $A(\epsilon ,q,\gamma )$ such that for
any $u\in H_{2}^{2}(M)$ 
\begin{equation*}
\left\Vert u\right\Vert _{p,\rho ^{\gamma }}^{2}\leq (1+\epsilon
)K(n,2,\gamma )^{2}\left\Vert \Delta _{g}u\right\Vert _{2}^{2}+A(\epsilon
,q,\gamma )\left\Vert u\right\Vert _{2}^{2}
\end{equation*}%
where $K(n,2,\gamma )$ is the optimal constant.
\end{theorem}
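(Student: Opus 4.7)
The plan is to reduce the global weighted inequality to the Euclidean Hardy--Sobolev inequality via a partition of unity argument, being careful near the single singular point $R$ where the weight $\rho^{\gamma}$ blows up. The strategy is standard for sharp Sobolev-type inequalities on compact manifolds: one refines the partition of unity so that the chart around $R$ can be taken arbitrarily small, which absorbs the geometric distortion into the factor $(1+\epsilon)$, while all other terms are controlled and pushed into $A(\epsilon,q,\gamma)\|u\|_2^2$.

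First I would recall (or state) the sharp Euclidean Hardy--Sobolev inequality: for $u\in C_c^{\infty}(\mathbb{R}^n)$ with $|x|$ denoting the Euclidean distance to the origin,
\begin{equation*}
\left(\int_{\mathbb{R}^n}|x|^{\gamma}|u|^{p}\,dx\right)^{2/p}\leq K(n,2,\gamma)^{2}\int_{\mathbb{R}^n}|\Delta u|^{2}\,dx,
\end{equation*}
valid under the scaling condition $\gamma/p=n/q-n/p-2$ with $2\leq p\leq 2n/(n-4)$ (this is the result whose best constant is denoted $K(n,2,\gamma)$). Then I would fix $\epsilon>0$ and cover $M$ by finitely many geodesic balls $(B(x_i,r))_{i=1}^{m}$ with $r<\delta(M)$, arranged so that exactly one ball $B_0=B(R,r)$ contains the pole of $\rho$. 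The radius $r=r(\epsilon)$ is chosen small enough that in normal coordinates on $B_0$ the metric $g$ and the Euclidean metric are $(1+\eta)$-close in the sense that $(1-\eta)|x|\leq\rho(Q)\leq(1+\eta)|x|$ and $(1-\eta)^{n}dx\leq dv_g\leq(1+\eta)^{n}dx$, with $\eta=\eta(\epsilon)$ chosen so the resulting multiplicative distortion in the Hardy--Sobolev inequality is at most $1+\epsilon/2$.

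Next I would pick a smooth partition of unity $(\eta_i^{2})_{i=1}^{m}$ subordinate to this cover with $\sum_i \eta_i^{2}=1$, and write
\begin{equation*}
\|u\|_{p,\rho^{\gamma}}^{2}=\Bigl(\int_{M}\rho^{\gamma}|u|^{p}dv_g\Bigr)^{2/p}\leq\sum_{i=1}^{m}\Bigl(\int_{M}\rho^{\gamma}|\eta_i u|^{p}dv_g\Bigr)^{2/p},
\end{equation*}
using the elementary inequality $|u|^p=(\sum \eta_i^2 |u|^2)^{p/2}\cdot|u|^{p-2}\leq\sum \eta_i^{p}|u|^{p}$ when $p\geq 2$ (or a convexity refinement if a sharper exponent manipulation is needed). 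For the single chart $B_0$ containing $R$, I pull $\eta_0 u$ back to $\mathbb{R}^n$ via normal coordinates, apply the sharp Euclidean Hardy--Sobolev inequality, and translate back; the metric distortion contributes a factor $1+\epsilon/2$. For each of the other charts, $\rho$ is bounded below by a positive constant, so $\rho^{\gamma}$ is bounded and the standard sharp Sobolev inequality on $(M,g)$ (the $\gamma=0$ case, which is established with optimal constant $K(n,2)$) applies to $\eta_i u$. Finally, the cross-terms involving derivatives of $\eta_i$ land on $\|u\|_2^{2}$ after using $\|\Delta(\eta_i u)\|_2^2\leq \|\eta_i\Delta u\|_2^2 + C(\epsilon,\eta_i)\|u\|_2^2$ together with the standard interpolation $\|\nabla u\|_2^2\leq \epsilon'\|\Delta u\|_2^2+c(\epsilon')\|u\|_2^2$ used earlier in the excerpt.

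The main obstacle, as is typical for sharp-constant Sobolev inequalities on compact manifolds, is controlling the constant in front of $\|\Delta u\|_2^{2}$ so that it is $(1+\epsilon)K(n,2,\gamma)^2$ and not something larger coming from the partition of unity. This requires the careful bookkeeping described above: the cut-off functions $\eta_i$ must be chosen so that $\|\Delta \eta_i\|_{\infty}$ and $\|\nabla \eta_i\|_{\infty}$ grow in a controlled way with $1/r$, and one uses that the bad terms they produce are of the form $\|u\|_2^{2}$ or can be absorbed into $\epsilon\|\Delta u\|_2^{2}$ via the standard interpolation, with the remainder swept into $A(\epsilon,q,\gamma)\|u\|_{2}^{2}$. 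This is essentially the argument carried out in \cite{6}; the conclusion follows by taking $\eta=\eta(\epsilon)$ small enough that the product of all distortion factors is bounded by $1+\epsilon$.
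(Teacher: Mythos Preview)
The paper does not actually prove this theorem; it states it and refers the reader to \cite{6} for the proof (``whose proofs are given in \cite{6}''). So there is no in-paper argument to compare against. Your proposal follows the standard route for sharp Sobolev-type inequalities with a single singular weight on a compact manifold---localize via a partition of unity, handle the one chart containing the pole $R$ by pulling back through normal coordinates to the sharp Euclidean Hardy--Sobolev inequality, treat the remaining charts (where $\rho$ is bounded away from zero) by the ordinary sharp $H_2^2\hookrightarrow L^N$ inequality, and absorb all cut-off and lower-order terms into $A(\epsilon,q,\gamma)\|u\|_2^2$ using the interpolation $\|\nabla u\|_2^2\leq \epsilon'\|\Delta u\|_2^2+c(\epsilon')\|u\|_2^2$. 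This is almost certainly the scheme carried out in \cite{6}, and it is the right one.

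One genuine slip to fix: the pointwise inequality you invoke,
\[
|u|^{p}=\Bigl(\sum_i \eta_i^{2}|u|^{2}\Bigr)^{p/2}\leq \sum_i \eta_i^{p}|u|^{p},
\]
is false for $p>2$ (take two nonzero $\eta_i$'s; convexity of $t\mapsto t^{p/2}$ gives the reverse inequality). What you actually need---and what makes your displayed estimate $\|u\|_{p,\rho^{\gamma}}^{2}\leq\sum_i\|\eta_i u\|_{p,\rho^{\gamma}}^{2}$ true---is the triangle inequality in $L^{p/2}(M,\rho^{\gamma}dv_g)$ applied to $|u|^{2}=\sum_i(\eta_i u)^{2}$:
\[
\|u\|_{p,\rho^{\gamma}}^{2}=\bigl\||u|^{2}\bigr\|_{L^{p/2}}=\Bigl\|\sum_i(\eta_i u)^{2}\Bigr\|_{L^{p/2}}\leq\sum_i\bigl\|(\eta_i u)^{2}\bigr\|_{L^{p/2}}=\sum_i\|\eta_i u\|_{p,\rho^{\gamma}}^{2}.
\]
With that correction the outline is sound.
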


In case $\gamma =0$, $K(n,2,0)=K(n,2)=K_{o}^{\frac{1}{2}}$ is the best
constant in the Sobolev's embedding of $H_{2}^{2}(M)$ in $L^{N}(M)$ where $N=%
\frac{2n}{n-4}$.

\begin{theorem}
\label{th2} Let $(M,g)$ be a compact Riemannian manifold of dimension $n\geq
5$ and $p$, $q$, $\gamma $ are real numbers satisfying $1\leq q\leq p\leq 
\frac{nq}{n-2q}$ , $\gamma <0$ and $l=1$,$2$.

If $\frac{\gamma }{p}=n$ $(\frac{1}{q}-\frac{1}{p})-l$ then the inclusion $%
H_{l}^{q}(M)\subset $ $L^{p}(M,\rho ^{\gamma })$ is continuous.$\newline
$If $\frac{\gamma }{p}>n$ $(\frac{1}{q}-\frac{1}{p})-l$ then inclusion $%
H_{l}^{q}(M)\subset $ $L^{p}(M,\rho ^{\gamma })\ $is compact.
\end{theorem}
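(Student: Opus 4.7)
The plan is to localize the embedding around the singular point $R$ where $\rho$ vanishes, and to patch together the classical Rellich--Kondrakov theorem (applicable away from $R$) with a local weighted Hardy--Sobolev inequality (needed near $R$). Cover $M$ by finitely many coordinate charts: one geodesic ball $B(R,r_{0})$ with $r_{0}<\delta(M)$ (so that $\rho(Q)=d(R,Q)$ there) together with charts on the complement where $\rho$ is bounded below by some $c>0$. Choose a smooth subordinate partition of unity and treat each piece separately; for the compact case one may reduce to handling a bounded sequence.

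Away from the singularity, on any chart where $\rho\geq c$, the weight $\rho^{\gamma}$ is bounded (since $\gamma<0$), so $L^{p}(\Omega,\rho^{\gamma})$ is equivalent as a normed space to $L^{p}(\Omega)$. The hypothesis $p\leq nq/(n-2q)$ is strictly subcritical for $l=1$ and at worst critical for $l=2$, and the classical Rellich--Kondrakov theorem then supplies the continuous embedding (resp.\ compactness when strictly subcritical) on this region.

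Near $R$, using normal coordinates and comparing the Riemannian volume element to the Lebesgue measure, matters reduce to a Euclidean Caffarelli--Kohn--Nirenberg inequality
\begin{equation*}
\Bigl(\int_{B_{r_{0}}} |x|^{\gamma}|u|^{p}\,dx\Bigr)^{1/p} \leq C\bigl(\|\nabla^{l}u\|_{L^{q}}+\|u\|_{L^{q}}\bigr),
\end{equation*}
which holds exactly when the critical scaling relation $\gamma/p=n(1/q-1/p)-l$ is satisfied; this yields the continuous embedding. For the compact case, set $\gamma_{c}=p\bigl[n(1/q-1/p)-l\bigr]$, note that $\gamma>\gamma_{c}$ by hypothesis, and observe that on $B(R,\delta)$ with $\delta<1$ small we have $\rho^{\gamma-\gamma_{c}}\leq \delta^{\gamma-\gamma_{c}}$, so applying the critical inequality with weight $\rho^{\gamma_{c}}$ gives
\begin{equation*}
\int_{B(R,\delta)}|u|^{p}\rho^{\gamma}\,dv_{g} \leq C\,\delta^{\gamma-\gamma_{c}}\|u\|_{H_{l}^{q}}^{p},
\end{equation*}
with the right-hand side tending to $0$ as $\delta\to 0$, uniformly over bounded subsets of $H_{l}^{q}(M)$.

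To assemble compactness, take a bounded sequence $(u_{k})\subset H_{l}^{q}(M)$, fix $\varepsilon>0$, choose $\delta$ so small that the weighted tail on $B(R,\delta)$ is uniformly below $\varepsilon$, extract a subsequence converging strongly in $L^{p}(M\setminus B(R,\delta),\rho^{\gamma})$ via the standard Rellich--Kondrakov theorem on the bounded-weight region, and conclude by a diagonal argument in $\delta$. The main obstacle is the local Hardy--Sobolev inequality at criticality: its scale invariance must be honestly established from a Euclidean Caffarelli--Kohn--Nirenberg estimate and transported to the manifold through normal coordinates with a careful use of the partition of unity. It is precisely the \emph{strict} sign of $\gamma-\gamma_{c}>0$ that upgrades the continuous embedding to a compact one, by providing the small factor $\delta^{\gamma-\gamma_{c}}$ that concentrates vanish away from $R$.
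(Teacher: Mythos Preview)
The paper does not actually prove this theorem: at the start of Section~2 it announces that the proofs of Theorems~\ref{th1} and~\ref{th2} ``are given in \cite{6}'' and then uses them as black boxes. So there is no in-paper argument to compare against.

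Your outline is the standard and correct strategy for such weighted Rellich--Kondrakov statements: split $M$ into a small geodesic ball around the singular point $R$ and its complement, handle the complement by the unweighted Rellich--Kondrakov theorem (since $\rho^{\gamma}$ is bounded there), reduce the ball via normal coordinates to a Euclidean Caffarelli--Kohn--Nirenberg/Hardy inequality at the critical weight $\gamma_{c}$, and then exploit the gap $\gamma-\gamma_{c}>0$ to make the near-$R$ contribution uniformly small before running a diagonal argument. This is precisely how the result is proved in the references the paper invokes (e.g.\ \cite{6}, and \cite{18} for the first-order case).

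One correction, however: you assert that the range $p\le nq/(n-2q)$ is ``strictly subcritical for $l=1$ and at worst critical for $l=2$''. This is backwards. The Sobolev critical exponent for $H_{l}^{q}$ is $nq/(n-lq)$, so $nq/(n-2q)$ is critical for $l=2$ and \emph{supercritical} for $l=1$ (since $nq/(n-q)<nq/(n-2q)$). As written, your ``away from $R$'' step would fail for $l=1$. What actually rescues the argument is the hypothesis $\gamma<0$: in both the equality case and the strict-inequality case it forces
\[
n\Bigl(\tfrac{1}{q}-\tfrac{1}{p}\Bigr)-l \;\le\; \tfrac{\gamma}{p} \;<\; 0,
\]
hence $p<nq/(n-lq)$, so $p$ is automatically strictly below the $H_{l}^{q}$ critical exponent and the unweighted Rellich--Kondrakov theorem does apply on $M\setminus B(R,\delta)$. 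With that justification in place your argument goes through.
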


The following variant of the Ekeland's variational principle will be also
useful

\begin{lemma}
\label{lem5'} If $V$ is a Banach space and $J\in C^{1}\left( V,R\right) $ is
bounded from below, then there exists a minimizing sequence $\left(
u_{n}\right) $ for $J$ in $V$ such that \ $J(u_{n})\rightarrow \inf_{V}J$
and $E^{\prime }\left( u_{n}\right) \rightarrow 0$ as $n\rightarrow \infty $.
\end{lemma}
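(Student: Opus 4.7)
The plan is to deduce this corollary from the classical Ekeland variational principle on complete metric spaces. First I would recall/invoke Ekeland's principle in the following sharp form: for every $\varepsilon>0$ and every $u\in V$ with $J(u)\le \inf_V J+\varepsilon$, there exists $v_\varepsilon\in V$ such that $J(v_\varepsilon)\le J(u)$, $\|v_\varepsilon-u\|\le \sqrt{\varepsilon}$, and $J(w)\ge J(v_\varepsilon)-\sqrt{\varepsilon}\,\|w-v_\varepsilon\|$ for every $w\in V$. This applies because $J$ is continuous (in particular lower semicontinuous) and bounded below by hypothesis, and $V$ is a complete metric space under the norm distance.

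Next I would manufacture the sequence. Pick a sequence $\varepsilon_n\downarrow 0$, choose $u_n^\ast\in V$ with $J(u_n^\ast)\le \inf_V J+\varepsilon_n$ (which exists by the definition of infimum), and apply Ekeland's principle to obtain $u_n\in V$ with $J(u_n)\le J(u_n^\ast)\le \inf_V J+\varepsilon_n$ and the variational inequality
\begin{equation*}
J(w)\ge J(u_n)-\sqrt{\varepsilon_n}\,\|w-u_n\|\qquad\text{for all }w\in V.
\end{equation*}
In particular $J(u_n)\to\inf_V J$ is immediate, so the minimizing property is handled.

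It remains to check that $J'(u_n)\to 0$ in $V^\ast$. For an arbitrary $h\in V$ with $\|h\|=1$ and $t>0$, take $w=u_n+th$ in the variational inequality to get
\begin{equation*}
\frac{J(u_n+th)-J(u_n)}{t}\ge -\sqrt{\varepsilon_n}.
\end{equation*}
Since $J\in C^1(V,\mathbb{R})$, letting $t\to 0^+$ gives $\langle J'(u_n),h\rangle\ge -\sqrt{\varepsilon_n}$. Replacing $h$ by $-h$ gives $\langle J'(u_n),h\rangle\le \sqrt{\varepsilon_n}$, hence
\begin{equation*}
\|J'(u_n)\|_{V^\ast}=\sup_{\|h\|=1}|\langle J'(u_n),h\rangle|\le\sqrt{\varepsilon_n}\longrightarrow 0,
\end{equation*}
which finishes the proof.

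The only non-routine ingredient is Ekeland's variational principle itself, and if one wishes to avoid quoting it, the real obstacle is its proof (an iterated minimization using the partial order $w\preceq v \iff J(w)+\sqrt{\varepsilon}\,\|w-v\|\le J(v)$ combined with completeness of $V$). Everything else is a direct one-sided directional derivative computation allowed by the $C^1$ regularity of $J$.
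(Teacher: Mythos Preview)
Your argument is correct and is exactly the standard derivation of this corollary from Ekeland's variational principle. The paper itself does not supply a proof of this lemma: it simply introduces it as ``a variant of the Ekeland's variational principle'' and uses it as a black box in Lemma~\ref{lem6}. So there is no discrepancy in approach to discuss; you have filled in a proof that the authors chose to omit, and you did so along the conventional lines (apply Ekeland with parameters $\varepsilon_n$ and $\lambda=\sqrt{\varepsilon_n}$, then differentiate the resulting perturbed minimality inequality in each direction to bound $\|J'(u_n)\|_{V^\ast}$ by $\sqrt{\varepsilon_n}$). One cosmetic remark: the statement in the paper writes $E'(u_n)$ where it clearly means $J'(u_n)$; your proof already reads it that way.
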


\begin{lemma}
\label{lem6} For any $\lambda $ $\in (0,$ $\lambda _{\circ })$

(i) \ There exists a minimizing sequence $\left( u_{m}\right) _{m}\subset
N_{\lambda }$ such that $J_{\lambda }(u_{m})=\alpha _{\lambda }+o(1)$ and $\
\nabla J_{\lambda }(u_{m})=o(1)$

(ii) There exists a minimizing sequence $\left( u_{m}\right) _{m}\subset
N_{\lambda }^{-}$ such that $J_{\lambda }(u_{m})=\alpha _{\lambda }^{-}+o(1)$
and $\nabla J_{\lambda }(u_{m})=o(1).$
\end{lemma}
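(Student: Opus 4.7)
The plan is to apply the variant of Ekeland's variational principle in Lemma \ref{lem5'} to $J_{\lambda}$ restricted to the Nehari manifold $N_{\lambda}$. By Lemma \ref{lem2}, for $\lambda\in(0,\lambda_{1})$ the degenerate set $N_{\lambda}^{0}$ is empty, so the constraint $\Phi_{\lambda}=0$ has non-vanishing derivative everywhere on $N_{\lambda}$, making $N_{\lambda}$ a genuine $C^{1}$-submanifold of $H_{2}^{2}(M)\setminus\{0\}$. Combined with the lower bound from Lemma \ref{lem3}, the manifold version of Ekeland's principle produces a sequence $(u_{m})\subset N_{\lambda}$ satisfying $J_{\lambda}(u_{m})\to\alpha_{\lambda}$ and whose tangential derivative tends to zero. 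The Lagrange multiplier theorem then furnishes $\theta_{m}\in\mathbb{R}$ with
$$\nabla J_{\lambda}(u_{m})-\theta_{m}\nabla\Phi_{\lambda}(u_{m})\longrightarrow 0\quad\text{in }\bigl(H_{2}^{2}(M)\bigr)^{\ast}.$$

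It remains to show $\theta_{m}\to 0$. Pairing the above with $u_{m}$ and using the identity $\langle\nabla J_{\lambda}(u),u\rangle=\Phi_{\lambda}(u)$ together with the constraint $\Phi_{\lambda}(u_{m})=0$ yields
$$\theta_{m}\langle\nabla\Phi_{\lambda}(u_{m}),u_{m}\rangle=o(\|u_{m}\|).$$
Thus one needs (a) an upper bound on $\|u_{m}\|$ and (b) a uniform positive lower bound on $|\langle\nabla\Phi_{\lambda}(u_{m}),u_{m}\rangle|$. For (a), I would reuse the computation of Lemma \ref{lem3}: eliminating the $L^{N}$-term via $\Phi_{\lambda}(u_{m})=0$ gives $\tfrac{N-2}{2N}\|u_{m}\|^{2}-C\lambda\|u_{m}\|^{q}\le\alpha_{\lambda}+1$, and since $q<2$ and $P_{g}$ is coercive, $\|u_{m}\|$ is bounded. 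For (b), the starting point is a quantitative version of Lemma \ref{lem2}: the argument there in fact produces, for $\lambda\in(0,\lambda_{1})$, a constant $\eta(\lambda)>0$ such that every $u\in N_{\lambda}$ satisfies $|\langle\nabla\Phi_{\lambda}(u),u\rangle|\ge\eta(\lambda)\|u\|^{2}$. Combined with the Sobolev lower bound $\|u_{m}\|\ge\xi>0$ (extracted from $\Phi_{\lambda}(u_{m})=0$ exactly as in the proof of Lemma \ref{lem5}), this gives the desired non-degeneracy, whence $\theta_{m}\to 0$ and therefore $\nabla J_{\lambda}(u_{m})\to 0$. This proves (i).

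For (ii), I would observe that $N_{\lambda}^{-}$ is relatively open in $N_{\lambda}$ by continuity of $u\mapsto\langle\nabla\Phi_{\lambda}(u),u\rangle$, and by Lemmas \ref{lem4} and \ref{lem5} the values $\alpha_{\lambda}^{-}>0>\alpha_{\lambda}^{+}$ are separated, so a minimizing sequence for $\alpha_{\lambda}^{-}$ cannot leak into $N_{\lambda}^{+}$ through the (empty) boundary $N_{\lambda}^{0}$. Applying Ekeland's principle on the relatively open set $N_{\lambda}^{-}$ and rerunning the Lagrange multiplier argument of part (i) verbatim gives a sequence with the claimed properties. The main obstacle is the quantitative non-degeneracy estimate (b): its proof requires sharpening the dichotomy that yielded $N_{\lambda}^{0}=\emptyset$ into a uniform quantitative gap, and this is what ultimately forces the Lagrange multipliers to vanish in the limit.
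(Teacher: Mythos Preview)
Your proposal is correct and follows the same route the paper intends: the paper's own proof is a one-line invocation of Lemma~\ref{lem3} together with Ekeland's principle (Lemma~\ref{lem5'}), with no further details given. What you have written is precisely the standard unpacking of that one-liner---Ekeland on the constraint set, Lagrange multipliers $\theta_m$, then $\theta_m\to 0$ via boundedness of $(u_m)$ and non-degeneracy of $\langle\nabla\Phi_\lambda(u_m),u_m\rangle$---and you have correctly flagged the quantitative gap estimate as the only point that requires care beyond Lemma~\ref{lem2} as stated; note also that your argument implicitly needs $\lambda<\min(\lambda_\circ,\lambda_1)$ rather than just $\lambda<\lambda_\circ$, a harmless restriction the paper itself imposes in later lemmas.
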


\begin{proof}
By Lemma \ref{lem3} and the Enkland's variational principle ( see \ref{lem5'}%
) $J_{\lambda }$ admits a Palais-Smale sequence at level $\alpha _{\lambda }$
in $N_{\lambda }$( the same is also true for (ii) ).
\end{proof}

Now, we establish the existence of a local minimum for $J_{\lambda }$ on $%
N_{\lambda }^{+}$

\begin{theorem}
\label{thm2} Let $\lambda $ $\in (0,$ $\lambda _{\circ }),$ and suppose that
a sequence $(u_{m})_{m}$ $\subset N_{\lambda }^{+}$ fulfils

\begin{equation*}
\left\{ 
\begin{array}{c}
J_{\lambda }(u_{_{m}})\leq c \\ 
\nabla J_{\lambda }(u_{_{m}})-\mu _{_{m}}\nabla \Phi _{\lambda
}(u_{m})\rightarrow 0%
\end{array}%
\right.
\end{equation*}%
.

with%
\begin{equation}
c<\frac{2}{n\text{ }K_{\circ }^{\frac{n}{4}}(Max\text{ }_{x\in M}f(x))^{%
\frac{n-4}{4}}}.  \tag{C1}  \label{C1}
\end{equation}%
Then the functional $J_{\lambda }$\ has a minimizer $u^{+}$ in $N_{\lambda
}^{+}$ which satisfies

(i) $J_{\lambda }(u^{+})=\alpha _{\lambda }^{+}<0$,

(ii) $u^{+}$ is a nontrivial solution of equation (\ref{3'}).
\end{theorem}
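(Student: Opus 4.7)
The plan is to treat this as a Palais--Smale compactness statement below the critical bubbling threshold specified by (C1), following the standard Nehari-manifold strategy, and then use the strict sign of the minimal level $\alpha_\lambda^+<0$ (Lemma \ref{lem4}) to force the weak limit into $N_\lambda^+$.

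\textbf{Step 1: Boundedness.} First I would show that the sequence $(u_m)\subset N_\lambda^+$ is bounded in $H_2^2(M)$. Since $u_m\in N_\lambda$, equation (\ref{2}) and the computation in the proof of Lemma \ref{lem3} give
\[
J_\lambda(u_m)\ge \frac{N-2}{2N}\|u_m\|^2-\lambda\frac{N-q}{Nq}\Lambda^{-q/2}V(M)^{1-q/N}(\max((1+\varepsilon)K_\circ,A_\varepsilon))^{q/2}\|u_m\|^q,
\]
and since $1<q<2$ the upper bound $J_\lambda(u_m)\le c$ forces $\|u_m\|$ to be bounded.

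\textbf{Step 2: Weak limit and compactness in subcritical norms.} After extracting a subsequence, $u_m\rightharpoonup u^+$ weakly in $H_2^2(M)$. By the Rellich--Kondrakov embedding of Theorem \ref{th2} applied to the weighted spaces $L^2(M,\rho^{-\sigma})$ and $L^2(M,\rho^{-\mu})$ (which are subcritical since $0<\sigma<2$ and $0<\mu<4$) together with the ordinary compact embeddings $H_2^2\hookrightarrow L^q$ and $H_2^2\hookrightarrow L^p$ for $p<N$, I get strong convergence in all the lower-order terms appearing in $J_\lambda$ and $\Phi_\lambda$. Only the critical term $\int f|u_m|^N\,dv_g$ may fail to pass to the limit.

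\textbf{Step 3: The Lagrange multipliers vanish.} Testing the condition $\nabla J_\lambda(u_m)-\mu_m\nabla\Phi_\lambda(u_m)\to 0$ against $u_m$ and using $\langle\nabla J_\lambda(u_m),u_m\rangle=\Phi_\lambda(u_m)=0$ gives $\mu_m\langle\nabla\Phi_\lambda(u_m),u_m\rangle\to 0$. Since $N_\lambda^0=\emptyset$ by Lemma \ref{lem2}, a careful quantitative version of that argument (repeating the estimate on $I_{\lambda'}$ there with the boundedness of $\|u_m\|$) shows $|\langle\nabla\Phi_\lambda(u_m),u_m\rangle|$ is bounded away from $0$, whence $\mu_m\to 0$. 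Therefore $\nabla J_\lambda(u_m)\to 0$ in $H_2^{-2}(M)$, and passing to the limit using Step 2 yields $\nabla J_\lambda(u^+)=0$, i.e.\ $u^+$ is a distributional solution of (\ref{3'}).

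\textbf{Step 4: Nontriviality via the threshold (C1) --- the main obstacle.} This is the hard part. I expect to apply Brezis--Lieb to write
\[
\int_M f|u_m|^N\,dv_g=\int_M f|u^+|^N\,dv_g+\int_M f|u_m-u^+|^N\,dv_g+o(1),
\]
together with the analogous splitting $\|u_m\|^2=\|u^+\|^2+\|u_m-u^+\|^2+o(1)$ (the lower-order parts of the norm pass compactly). Setting $v_m=u_m-u^+$ and using $\nabla J_\lambda(u^+)=0$ I would derive
\[
\|v_m\|^2-\int_M f|v_m|^N\,dv_g=o(1),\qquad J_\lambda(u_m)=J_\lambda(u^+)+\tfrac{1}{2}\|v_m\|^2-\tfrac{1}{N}\int_M f|v_m|^N\,dv_g+o(1).
\]
If $\|v_m\|\not\to 0$, the sharp Sobolev inequality with best constant $K_\circ$ yields
\[
\|v_m\|^2\le\bigl(\max_{x\in M}f(x)\bigr)^{(N-2)/N}K_\circ\Bigl(\int_M f|v_m|^N\,dv_g\Bigr)^{2/N}+o(1),
\]
and combining with $\|v_m\|^2-\int f|v_m|^N\to 0$ gives the lower bound $\lim\|v_m\|^2\ge K_\circ^{-n/4}(\max f)^{-(n-4)/4}$. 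Substituting back, and using $J_\lambda(u^+)\ge 0$ (should $u^+=0$ or lie elsewhere), the energy $c$ would exceed $\tfrac{2}{n}K_\circ^{-n/4}(\max f)^{-(n-4)/4}$, contradicting (C1). Hence $v_m\to 0$ strongly, i.e.\ $u_m\to u^+$ in $H_2^2$, and in particular $u^+\neq 0$; otherwise, $J_\lambda(u^+)=c\ge 0$ would contradict $c<\alpha_\lambda^+<0$ provided we have already identified the PS level with $\alpha_\lambda^+$.

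\textbf{Step 5: Conclusion.} Strong convergence gives $\Phi_\lambda(u^+)=0$, so $u^+\in N_\lambda$, and $J_\lambda(u^+)=\lim J_\lambda(u_m)\le c<0$ forces $u^+\notin N_\lambda^0$ (void anyway) and $u^+\notin N_\lambda^-$ by Lemma \ref{lem5}, hence $u^+\in N_\lambda^+$. Combining with Lemmas \ref{lem1} and \ref{lem4}, $u^+$ is a minimizer of $J_\lambda$ on $N_\lambda^+$ with $J_\lambda(u^+)=\alpha_\lambda^+<0$ and is a nontrivial weak solution of (\ref{3'}).
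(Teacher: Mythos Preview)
Your approach is essentially the paper's: bound the sequence, extract a weak limit, use the compact weighted embeddings of Theorem~\ref{th2} for the singular lower-order terms, split the critical term via Br\'ezis--Lieb, and then exploit the sharp Sobolev inequality together with the threshold~(\ref{C1}) to upgrade weak to strong convergence, finally using the sign $\alpha_\lambda^+<0$ to exclude $N_\lambda^-$. Your Step~3 on the Lagrange multipliers is a genuine addition: the paper silently replaces the hypothesis $\nabla J_\lambda(u_m)-\mu_m\nabla\Phi_\lambda(u_m)\to 0$ by $\nabla J_\lambda(u_m)\to 0$, whereas you argue that $\mu_m\to 0$ from $\langle\nabla\Phi_\lambda(u_m),u_m\rangle$ being bounded away from zero.

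One slip to fix: in Steps~4--5 you write ``$c<0$'', but the hypothesis only gives $c$ below the \emph{positive} bubbling threshold. The sign argument that rules out $u^+\in N_\lambda^-$ (and forces $u^+\neq 0$) needs $J_\lambda(u^+)<0$, and for that you must, as the paper does, apply the whole machinery to a \emph{minimizing} Palais--Smale sequence at level $\alpha_\lambda=\alpha_\lambda^+<0$ (supplied by Lemma~\ref{lem6} and Lemma~\ref{lem4}), not to an arbitrary sequence with $J_\lambda(u_m)\le c$. You acknowledge this parenthetically (``provided we have already identified the PS level with $\alpha_\lambda^+$''); make it explicit and the argument is complete.
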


\begin{proof}
Let $\left( u_{m}\right) _{m}\subset N_{\lambda }^{+}$ be a Palais-Smale
sequence for $J_{\lambda }$ on $N_{\lambda }$ i.e. 
\begin{equation*}
J_{\lambda }(u_{m})=\alpha _{\lambda }+o(1)\ \text{and\ }\nabla J_{\lambda
}(u_{m})=o(1)\text{ in }H_{2}^{2}(M)^{\prime }\text{.}
\end{equation*}%
Obviously 
\begin{equation*}
-\alpha _{\lambda }+o(1)\leq J_{\lambda }(u_{m})-\frac{1}{q}\left\langle
\nabla J_{\lambda }(u_{m}),u_{m}\right\rangle \leq \alpha _{\lambda }+o(1)
\end{equation*}%
or%
\begin{equation*}
-\alpha _{\lambda }+o(1)\leq \left( \frac{N-2}{2N}-\frac{N-2}{Nq}\right)
\left\Vert u_{m}\right\Vert ^{2}\leq \alpha _{\lambda }+o(1)\text{.}
\end{equation*}%
Hence%
\begin{equation*}
\alpha _{\lambda }\left( \frac{N-2}{Nq}-\frac{N-2}{2N}\right) ^{-1}+o(1)\leq
\left\Vert u_{m}\right\Vert ^{2}\leq -\alpha _{\lambda }\left( \frac{N-2}{Nq}%
-\frac{N-2}{2N}\right) ^{-1}+o(1)
\end{equation*}%
\newline
so the sequence $\left( u_{m}\right) _{m}$ is bounded in $H_{2}^{2}(M)$ and
by the well known Sobolev's embedding, we get up to a subsequence that

$\ \ \ \ \ \ \ $ $u_{m}\rightarrow u^{+}$\ \ weakly in $H_{2}^{2}(M)$.%
\newline
$\ \ \ \ \ \ \ $ $u_{m}\rightarrow u^{+}$ \ strongly in $L^{p}(M)$\ for $%
1<p<N=\frac{2n}{n-4}.$\newline
$\ \ \ \ \ \ $ $\nabla u_{m}\rightarrow \nabla u^{+}$ \ strongly in $%
L^{q}(M) $\ for $1<q<2^{\ast }=\frac{2n}{n-2}.$\newline
$\ \ \ \ \ \ \ $ $u_{m}\rightarrow u^{+}$ a.e in $M.$\newline
Put 
\begin{equation*}
w_{m}:=u_{m}-u^{+}
\end{equation*}%
by Br\'{e}zis-Lieb Lemma ( see \cite{9}), we obtain%
\begin{equation*}
\left\Vert \Delta _{g}u_{m}\right\Vert _{2}^{2}-\left\Vert \Delta
_{g}u^{+}\right\Vert _{2}^{2}=\left\Vert \Delta _{g}w_{m}\right\Vert
_{2}^{2}+o(1)
\end{equation*}%
and 
\begin{equation*}
\int_{M}f(x)\left( \left\vert u_{m}\right\vert ^{N}-\left\vert
u^{+}\right\vert ^{N}\right) dv(g)=\int_{M}f(x)\left\vert w_{m}\right\vert
^{N}dv(g)+o(1)
\end{equation*}%
Now since $\sigma \in \left( 0,2\right) $ and $\mu \in \left( 0,4\right) $,
by Theorem \ref{th2} we infer that $\nabla u_{m}\rightarrow \nabla u^{+}$
strongly in $L^{2}(M,\rho ^{-\sigma })$ and $u_{m}\rightarrow u^{+}$
strongly in $L^{2}(M,\rho ^{-\mu })$.First, we prove that $u^{+}\in
N_{\lambda }.$\newline
Taking into account of the strong convergences of $\nabla u_{m}\rightarrow
\nabla u^{+}$ in $L^{2}\left( M,\rho ^{-\sigma }\right) $ and $%
u_{m}\rightarrow u^{+}$ in $L^{2}\left( M,\rho ^{-\mu }\right) $, we obtain 
\begin{equation*}
J_{\lambda }(u_{m})-J_{\lambda }(u^{+})
\end{equation*}%
\begin{equation}
=\frac{1}{2}\left\Vert \Delta _{g}\left( u_{m}-u^{+}\right) \right\Vert
_{2}^{2}-\frac{1}{N}\int_{M}f(x)\left\vert u_{m}-u^{+}\right\vert
^{N}dv(g)+o(1)\text{.}  \label{9}
\end{equation}%
Since $u_{m}-u^{+}\rightarrow 0$\ weakly in $H_{2}^{2}(M)$, we test by $%
\nabla J_{\lambda }(u_{m})-\nabla J_{\lambda }(u)$ and get%
\begin{equation*}
\left\langle \nabla J_{\lambda }(u_{m})-\nabla J_{\lambda
}(u^{+}),u_{m}-u^{+}\right\rangle =
\end{equation*}%
\begin{equation}
\left\Vert \Delta _{g}\left( u_{m}-u^{+}\right) \right\Vert
_{2}^{2}-\int_{M}f(x)\left\vert u_{m}-u^{+}\right\vert ^{N}dv(g)=o(1)\text{.}
\label{10}
\end{equation}%
So by (\ref{10}), we obtain%
\begin{equation*}
J_{\lambda }(u_{m})-J_{\lambda }(u^{+})=\frac{1}{2}\left\Vert \Delta
_{g}\left( u_{m}-u^{+}\right) \right\Vert _{2}^{2}-\frac{1}{N}\left\Vert
\Delta _{g}\left( u_{m}-u^{+}\right) \right\Vert _{2}^{2}+o(1)
\end{equation*}%
i.e.%
\begin{equation*}
J_{\lambda }(u_{m})-J_{\lambda }(u^{+})=\frac{2}{n}\left\Vert \Delta
_{g}\left( u_{m}-u^{+}\right) \right\Vert _{2}^{2}+o(1)\text{.}
\end{equation*}%
By Sobolev's inequality, we have for all $u\in H_{2}^{2}(M)$ 
\begin{equation*}
\left\Vert u\right\Vert _{N}^{2}\leq (1+\varepsilon )K_{\circ
}\int_{M}\left( \Delta _{g}u\right) ^{2}+\left\vert \nabla _{g}u\right\vert
^{2}dv(g)+A_{\varepsilon }\int_{M}u^{2}dv(g)
\end{equation*}%
We test the Sobolev's inequality by $u_{m}-u,$ to get%
\begin{equation}
\left\Vert u_{m}-u^{+}\right\Vert _{N}^{2}\leq (1+\varepsilon )K_{\circ
}\int_{M}\left( \Delta _{g}(u_{m}-u^{+})\right) ^{2}dv(g)+o(1)\text{.}
\label{11}
\end{equation}%
Then (\ref{11}) implies that%
\begin{equation*}
\int_{M}f(x)\left\vert u_{m}-u^{+}\right\vert ^{N}dv(g)\leq (1+\varepsilon
)^{\frac{n}{n-4}}\max_{x\in M}f(x)K_{\circ }^{\frac{n}{n-4}}\left\Vert
\Delta _{g}(u_{m}-u^{+})\right\Vert _{2}^{N}+o(1)
\end{equation*}%
and by (\ref{10}) one writes%
\begin{equation*}
o(1)\geq \left\Vert \Delta _{g}\left( u_{m}-u^{+}\right) \right\Vert
_{2}^{2}-(1+\varepsilon )^{\frac{n}{n-4}}\max_{x\in M}f(x)K_{\circ }^{\frac{n%
}{n-4}}\left\Vert \Delta _{g}(u_{m}-u^{+})\right\Vert _{2}^{N}+o(1)\text{.}
\end{equation*}%
or in another words%
\begin{equation*}
o(1)\geq \left\Vert \Delta _{g}\left( u_{m}-u^{+}\right) \right\Vert
_{2}^{2}(1-(1+\varepsilon )^{\frac{n}{n-4}}\max_{x\in M}f(x)K_{\circ }^{%
\frac{n}{n-4}}\left\Vert \Delta _{g}\left( u_{m}-u^{+}\right) \right\Vert
_{2}^{N-2})+o(1)\text{.}
\end{equation*}%
Hence if%
\begin{equation*}
\limsup_{\newline
m\rightarrow +\infty }\left\Vert \Delta _{g}\left( u_{m}-u^{+}\right)
\right\Vert _{2}^{N-2}<\frac{1}{(1+\varepsilon )^{\frac{n}{n-4}}K_{\circ }^{%
\frac{n}{n-4}}\max_{x\in M}f(x)}
\end{equation*}%
we get%
\begin{equation*}
\frac{2}{n}\int_{M}\left( \Delta _{g}(u_{m}-u^{+})\right) ^{2}dv(g)<c.
\end{equation*}%
and since by assumption 
\begin{equation*}
c<\frac{2}{n\text{ }K_{\circ }^{\frac{n}{4}}(Max\text{ }_{x\in M}f(x))^{%
\frac{n-4}{4}}}
\end{equation*}%
we deduce that 
\begin{equation*}
\int_{M}\left( \Delta _{g}(u_{m}-u^{+})\right) ^{2}dv(g)<\frac{1}{K_{\circ
}^{\frac{n}{4}}(\max_{x\in M}f(x))^{\frac{n-4}{4}}}\text{.}
\end{equation*}%
Hence 
\begin{equation*}
o(1)\geq \left\Vert \Delta _{g}(u_{m}-u^{+})\right\Vert _{2}^{2}\underbrace{%
(1-(1+\varepsilon )^{\frac{n}{n-4}}\max_{x\in M}f(x)K_{\circ }^{\frac{n}{n-4}%
}\left\Vert \Delta _{g}(u_{m}-u^{+})\right\Vert _{2}^{N-2})}_{>0}+o(1)
\end{equation*}%
or%
\begin{equation*}
\left\Vert \Delta _{g}(u_{m}-u^{+})\right\Vert _{2}^{2}=o(1)
\end{equation*}%
i.e. $u_{m}\rightarrow u^{+}$ converges strongly in $H_{2}^{2}(M)$.\newline
Obviously $u^{+}\in N_{\lambda }$. We claim that $u^{+}\in N_{\lambda }^{+}$
since it is not the case $u^{+}\in N_{\lambda }^{-},$ thus $\left\langle
\nabla J_{\lambda }(u^{+}),u^{+}\right\rangle =0$ and $\left\langle \nabla
\Phi _{\lambda }(u^{+}),u^{+}\right\rangle <0,$ which implies that $%
J_{\lambda }(u^{+})>0,$ contradiction.\newline
Then,%
\begin{equation*}
J_{\lambda }(u^{+})=\alpha _{\lambda }^{+}=\alpha _{\lambda }<0\text{.}
\end{equation*}%
Now, we want to prove that $u^{+}$ is a trivial solution to equation (\ref%
{3'}) but this follows from Lemma\textbf{\ }\ref{lem1} since \ in that case $%
u^{+}$ is a global minimizer of $J_{\lambda }$ in $H_{2}^{2}(M).$
\end{proof}

\begin{theorem}
\label{thm3} Let $\lambda $ $\in (0,$ $\lambda _{\circ })$ and suppose that
a sequence $(u_{m})_{m}$ $\subset N_{\lambda }^{-}$ fulfils

\begin{equation*}
\left\{ 
\begin{array}{c}
J_{\lambda }(u_{_{m}})\leq c \\ 
\nabla J_{\lambda }(u_{_{m}})-\mu _{_{m}}\nabla \Phi _{\lambda
}(u_{m})\rightarrow 0%
\end{array}%
\right.
\end{equation*}%
.

with%
\begin{equation}
c<\frac{2}{n\text{ }K_{\circ }^{\frac{n}{4}}(Max\text{ }_{x\in M}f(x))^{%
\frac{n-4}{4}}}.  \label{C2}
\end{equation}%
Then the functional $J_{\lambda }$\ has a minimizer $u^{-}$ in $N_{\lambda
}^{-}$ and it satisfies

(i) $J_{\lambda }(u^{-})=\alpha _{\lambda }^{-}>0$,

(ii) $u^{-}$ is a nontrivial solution of equation (\ref{1}).
\end{theorem}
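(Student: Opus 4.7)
The plan is to follow the template of the proof of Theorem \ref{thm2}, substituting $N_{\lambda}^{-}$ for $N_{\lambda}^{+}$ and $\alpha_{\lambda}^{-}$ for $\alpha_{\lambda}^{+}$, and then to argue that the weak limit actually lies in $N_{\lambda}^{-}$ rather than in $N_{\lambda}^{+}$ or being trivial. First I would pick a Palais--Smale sequence $(u_m)\subset N_{\lambda}^{-}$ at level $\alpha_{\lambda}^{-}$ (provided by Lemma \ref{lem6}(ii)), and use the usual computation
\begin{equation*}
J_{\lambda}(u_m)-\tfrac{1}{q}\langle\nabla J_{\lambda}(u_m),u_m\rangle=\Bigl(\tfrac{N-2}{2N}-\tfrac{N-2}{Nq}\Bigr)\|u_m\|^{2}
\end{equation*}
together with $\alpha_{\lambda}^{-}>0$ (Lemma \ref{lem5}) to get a two-sided bound on $\|u_m\|$, hence boundedness in $H_{2}^{2}(M)$.

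Next I would extract a subsequence with $u_m\rightharpoonup u^{-}$ in $H_{2}^{2}(M)$, strongly in $L^{p}(M)$ for $1<p<N$, almost everywhere, and (crucially, by Theorem \ref{th2} applied with $0<\sigma<2$ and $0<\mu<4$) strongly in $L^{2}(M,\rho^{-\sigma})$ for the gradients and in $L^{2}(M,\rho^{-\mu})$. Using Br\'ezis--Lieb on the $L^{N}$ and $L^{2}$ terms exactly as in the proof of Theorem \ref{thm2}, I would obtain
\begin{equation*}
J_{\lambda}(u_m)-J_{\lambda}(u^{-})=\tfrac{1}{2}\|\Delta_g(u_m-u^{-})\|_{2}^{2}-\tfrac{1}{N}\int_M f|u_m-u^{-}|^{N}dv_g+o(1),
\end{equation*}
and testing $\nabla J_{\lambda}(u_m)-\nabla J_{\lambda}(u^{-})$ against $u_m-u^{-}$ would give $\|\Delta_g(u_m-u^{-})\|_2^2=\int_M f|u_m-u^{-}|^{N}dv_g+o(1)$, so that $J_{\lambda}(u_m)-J_{\lambda}(u^{-})=\tfrac{2}{n}\|\Delta_g(u_m-u^{-})\|_{2}^{2}+o(1)$.

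The decisive step, identical in spirit to Theorem \ref{thm2}, is to combine this with the sharp Sobolev inequality
\begin{equation*}
\int_M f|u_m-u^{-}|^{N}dv_g\leq(1+\varepsilon)^{\frac{n}{n-4}}(\max f)\,K_{\circ}^{\frac{n}{n-4}}\|\Delta_g(u_m-u^{-})\|_{2}^{N}+o(1)
\end{equation*}
and the threshold hypothesis (\ref{C2}) on $c$. This forces
\begin{equation*}
\|\Delta_g(u_m-u^{-})\|_{2}^{N-2}<\bigl((1+\varepsilon)^{\frac{n}{n-4}}K_{\circ}^{\frac{n}{n-4}}\max f\bigr)^{-1},
\end{equation*}
from which the factor $1-(1+\varepsilon)^{\frac{n}{n-4}}K_{\circ}^{\frac{n}{n-4}}(\max f)\|\Delta_g(u_m-u^{-})\|_{2}^{N-2}$ is strictly positive, yielding $\|\Delta_g(u_m-u^{-})\|_{2}\to 0$ and hence strong convergence $u_m\to u^{-}$ in $H_{2}^{2}(M)$. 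Passing to the limit we get $u^{-}\in N_{\lambda}$ and $J_{\lambda}(u^{-})=\alpha_{\lambda}^{-}>0$.

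The main obstacle, and the only place where the argument genuinely diverges from that of Theorem \ref{thm2}, is ruling out $u^{-}\in N_{\lambda}^{+}$ and also ruling out $u^{-}\equiv 0$. For the former I would argue by contradiction: if $u^{-}\in N_{\lambda}^{+}$ then by Lemma \ref{lem4} we would have $J_{\lambda}(u^{-})<0$, contradicting $J_{\lambda}(u^{-})=\alpha_{\lambda}^{-}>0$. For nontriviality, $u^{-}\equiv 0$ together with strong convergence would give $\alpha_{\lambda}^{-}=J_{\lambda}(0)=0$, again contradicting $\alpha_{\lambda}^{-}>0$ from Lemma \ref{lem5}. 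With $u^{-}\in N_{\lambda}^{-}\subset N_{\lambda}\setminus N_{\lambda}^{0}$ established, Lemma \ref{lem1} applies and shows that the Lagrange multiplier vanishes, so $\nabla J_{\lambda}(u^{-})=0$ and $u^{-}$ is a nontrivial distributional solution of (\ref{3'}), completing the proof.
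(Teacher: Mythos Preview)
Your proposal is correct and follows exactly the approach the paper intends: the paper's own proof of Theorem \ref{thm3} reads in its entirety ``The proof is similar to that of Theorem \ref{thm2}, so we omit it,'' and you have faithfully reproduced that argument with the appropriate sign changes, in particular using $\alpha_{\lambda}^{-}>0$ from Lemma \ref{lem5} both to exclude $u^{-}\in N_{\lambda}^{+}$ and to rule out $u^{-}\equiv 0$.
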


\begin{proof}
The proof is similar to that of Theorem \ref{thm2}, so we omit it.
\end{proof}

\begin{remark}
The nontrivial solutions $u^{+}$ and $u^{-}$ of equation (\ref{1}) given by
Theorem \ref{thm2} and Theorem\ref{thm3} are distinct since $u^{+}\in
N_{\lambda }^{+}$, $u^{-}\in N_{\lambda }^{-}$ and $N_{\lambda }^{+}\cap
N_{\lambda }^{-}=\emptyset $.
\end{remark}

\section{The sharp case $\protect\sigma =2$ and $\protect\mu =4$}

By section four, for any $\sigma \in (0,2)$ and $\mu \in (0,4)$, there is a
weak solution $u_{\sigma ,\mu }^{+}\in N_{\lambda }^{+}$ (resp. $u_{\sigma
,\mu }^{-}\in N_{\lambda }^{-}$) of equation (\ref{3'}). Now we are going to
show that the sequence $\left( u_{\sigma ,\mu }^{+}\right) _{\sigma ,\mu }$
and $\left( u_{\sigma ,\mu }^{-}\right) _{\sigma ,\mu }$are bounded in $%
H_{2}^{2}\left( M\right) $. First of all we have%
\begin{equation*}
J_{\lambda ,\sigma ,\mu }(u_{\sigma ,\mu })=\frac{1}{2}\left\Vert u_{\sigma
,\mu }\right\Vert ^{2}-\frac{1}{N}\int_{M}f(x)\left\vert u_{\sigma ,\mu
}\right\vert ^{N}dv_{g}-\frac{1}{q}\lambda \int_{M}\left\vert u_{\sigma ,\mu
}\right\vert ^{q}dv_{g}
\end{equation*}%
and since $u_{\sigma ,\mu }\in N_{\lambda }$, we infer that%
\begin{equation*}
J_{\lambda ,\sigma ,\mu }(u_{\sigma ,\mu })=\frac{N-2}{2N}\left\Vert
u_{\sigma ,\mu }\right\Vert ^{2}-\lambda \frac{N-q}{Nq}\int_{M}\left\vert
u_{\sigma ,\mu }\right\vert ^{q}dv_{g}\text{.}
\end{equation*}

For a smooth function $a$ on $M$, denotes by $a^{-}=\min \left( 0,\min_{x\in
M}(a(x)\right) $. Let $K(n,2,\sigma )$ the best constant and $A\left(
\varepsilon ,\sigma \right) $ the constants in the Hardy- Sobolev inequality.

Denote by $(u_{_{\sigma _{m},\mu _{m}}}^{+})_{m}$ \ a countable subsequence
of the sequence $(u_{_{\sigma ,\mu }}^{+})_{\sigma ,\mu }$ given above.

\begin{theorem}
Let $(M,g)$ be a Riemannian compact manifold of dimension $n\geq 5$. Let $%
\left( u_{m}^{+}\right) _{m}=(u_{_{\sigma _{m},\mu _{m}}}^{+})_{m}$ be a
sequence in $N_{\lambda }^{+}$ such that 
\begin{equation*}
\left\{ 
\begin{array}{c}
J_{\lambda ,\sigma ,\mu }(u_{_{m}}^{+})\leq c_{\sigma ,\mu } \\ 
\nabla J_{\lambda }(u_{_{m}}^{+})-\mu _{_{\sigma _{m},\mu _{m}}}\nabla \Phi
_{\lambda }(u_{_{m}}^{+})\rightarrow 0%
\end{array}%
\right. \text{.}
\end{equation*}%
Suppose that 
\begin{equation*}
c_{\sigma ,\mu }<\frac{2}{n\text{ }K\left( n,2\right) ^{\frac{n}{4}%
}(\max_{x\in M}f(x))^{\frac{n-4}{4}}}
\end{equation*}%
and 
\begin{equation*}
1+a^{-}\max \left( K(n,2,\sigma ),A\left( \varepsilon ,\sigma \right)
\right) +b^{-}\max \left( K(n,2,\mu ),A\left( \varepsilon ,\mu \right)
\right) >0
\end{equation*}%
then the equation%
\begin{equation}
\Delta ^{2}u-\nabla ^{\mu }(\frac{a}{\rho ^{2}}\nabla _{\mu }u)+\frac{bu}{%
\rho ^{4}}=f\left\vert u\right\vert ^{N-2}u+\lambda \left\vert u\right\vert
^{q-2}u  \label{12'}
\end{equation}%
has a non trivial solution $u^{+}\in N_{\lambda }^{+}$ in the distribution .
\end{theorem}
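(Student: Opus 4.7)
My plan is to treat this theorem as a limiting case of Theorem \ref{thm2}, passing from the subcritical weights $\sigma_m \in (0,2)$, $\mu_m \in (0,4)$ up to the sharp values $\sigma=2$, $\mu=4$. The first step is to show that $(u_m^+)$ is bounded in $H_2^2(M)$ uniformly in $m$. Combining $J_{\lambda,\sigma_m,\mu_m}(u_m^+) \leq c_{\sigma,\mu}$ with $\langle \nabla J_{\lambda,\sigma_m,\mu_m}(u_m^+), u_m^+\rangle = o(1)$ and discarding the sign-definite critical term yields an upper bound on the quadratic form of $P_g$ applied to $u_m^+$. Controlling the weighted terms $\int_M a\rho^{-\sigma_m}|\nabla u_m^+|^2\,dv_g$ and $\int_M b\rho^{-\mu_m}(u_m^+)^2\,dv_g$ via Hardy--Sobolev (Theorem \ref{th1}), the hypothesis
\[ 1 + a^-\max(K(n,2,\sigma), A(\varepsilon,\sigma)) + b^-\max(K(n,2,\mu), A(\varepsilon,\mu)) > 0 \]
is precisely what makes that quadratic form coercive over $\|\cdot\|_{H_2^2}^2$ uniformly as $\sigma_m \to 2$, $\mu_m \to 4$, producing the desired uniform bound.

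Next I would extract a subsequence with $u_m^+ \rightharpoonup u^+$ weakly in $H_2^2(M)$, strongly in $L^p(M)$ for $1<p<N$, and pointwise a.e. on $M$. Passing to the limit in the weighted linear terms of the equation is subtle, since the weighted Rellich--Kondrakov embedding in Theorem \ref{th2} becomes only continuous (not compact) at $\sigma=2$, $\mu=4$; however, pointwise a.e.\ convergence, the uniform Hardy--Sobolev bound, and a Vitali equi-integrability argument suffice to pass to the limit in the weak formulation and identify $u^+$ as a distributional solution of (\ref{12'}).

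The main obstacle is the strong convergence $u_m^+ \to u^+$ in $H_2^2(M)$ that is needed both to place $u^+ \in N_\lambda^+$ and to exclude $u^+ \equiv 0$. Here I would reproduce the Br\'ezis--Lieb step of Theorem \ref{thm2}: setting $w_m = u_m^+ - u^+$, one gets
\[ J_{\lambda,\sigma_m,\mu_m}(u_m^+) - J_{\lambda,2,4}(u^+) = \frac{2}{n}\|\Delta_g w_m\|_2^2 + o(1), \]
and testing the Sobolev inequality against $w_m$ produces the dichotomy that either $\|\Delta_g w_m\|_2 \to 0$ or $\|\Delta_g w_m\|_2^{N-2}$ must exceed $\bigl[(1+\varepsilon)^{n/(n-4)}K(n,2)^{n/(n-4)}\max f\bigr]^{-1}$. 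The level bound $c_{\sigma,\mu} < 2\bigl/\bigl[n\,K(n,2)^{n/4}(\max f)^{(n-4)/4}\bigr]$ is exactly what rules out the second alternative. With strong convergence in hand, the sign $\langle \nabla \Phi_\lambda(u^+), u^+\rangle > 0$ passes to the limit, Lemma \ref{lem2} excludes $u^+ \in N_\lambda^0$, and Lemma \ref{lem4} combined with $J_\lambda(u^+) = \alpha_\lambda^+ < 0$ forces $u^+ \not\equiv 0$, completing the argument.
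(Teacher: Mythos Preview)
Your proposal is correct and follows essentially the same architecture as the paper's proof: uniform $H_2^2$-boundedness via the Hardy--Sobolev inequality together with the hypothesis on $a^-$ and $b^-$, weak compactness, the Br\'ezis--Lieb splitting, and the level bound $c_{\sigma,\mu}<\frac{2}{n}K(n,2)^{-n/4}(\max f)^{-(n-4)/4}$ to rule out concentration and force strong convergence.

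There are two minor technical variations worth flagging. First, the paper establishes uniform coercivity indirectly, by showing $\liminf_{(\sigma,\mu)\to(2^-,4^-)}\Lambda_{\sigma,\mu}>0$ through a contradiction argument on the first eigenvalue $\nu_{1,\sigma,\mu}$ of $P_g$, whereas you argue coercivity directly from the Hardy--Sobolev estimate; both routes use the hypothesis on $a^-,b^-$ in the same way. Second, to pass to the limit in the weighted linear terms as $\sigma_m\to 2$ and $\mu_m\to 4$, the paper splits each integral and combines weak convergence of $\nabla u_m^+$ in $L^2(M,\rho^{-2})$ (resp.\ $u_m^+$ in $L^2(M,\rho^{-4})$) with Lebesgue dominated convergence for the varying weight $\rho^{-\sigma_m}-\rho^{-2}$, while you invoke Vitali equi-integrability. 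Either device is adequate here; the paper's version is perhaps more explicit about how the varying exponent is absorbed, while yours is more compact.
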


\begin{proof}
Let $\left( u_{m}^{+}\right) _{m}=\left( u_{\sigma _{m};\mu _{m}}^{+}\right)
_{m}\subset N_{\lambda ,\sigma ,\mu }^{+}$,%
\begin{equation*}
J_{\lambda ,\sigma ,\mu }(u_{m}^{+})=\frac{N-2}{2N}\left\Vert
u_{m}^{+}\right\Vert ^{2}-\lambda \frac{N-q}{Nq}\int_{M}\left\vert
u_{m}^{+}\right\vert ^{q}dv_{g}
\end{equation*}%
As in proof of Theorem \ref{thm2}, we get 
\begin{equation*}
J_{\lambda ,\sigma ,\mu }(u_{m}^{+})\geq
\end{equation*}%
\begin{equation*}
\left\Vert u_{m}^{+}\right\Vert ^{2}(\frac{N-2}{2N}-\lambda \frac{N-q}{Nq}%
\Lambda _{\sigma ,\mu }^{-\frac{q}{2}}V(M)^{1-\frac{q}{N}}(\max
((1+\varepsilon )K\left( n,2\right) ,A_{\varepsilon }))^{\frac{q}{2}}\tau
^{q-2})>0
\end{equation*}%
where $0<\lambda <\frac{\frac{\left( N-2\right) q}{2\left( N-q\right) }%
\Lambda _{\sigma ,\mu }^{\frac{q}{2}}}{V(M)^{1-\frac{q}{N}}(\max
((1+\varepsilon )K\left( n,2\right) ,A_{\varepsilon }))^{\frac{q}{2}}\tau
^{q-2}}=\lambda _{o,\sigma ,\mu }^{+}$ and $\Lambda _{\sigma ,\mu }$ is the
coercivity's constant ( which depends on $\sigma $ and $\mu )$

First we claim that%
\begin{equation*}
\lim_{\left( \sigma ,\mu \right) \rightarrow \left( 2^{-},4^{-}\right) }\inf
\Lambda _{\sigma ,\mu }>0\text{.}
\end{equation*}%
Indeed, if $\nu _{1,\sigma ,\mu }$ denotes the first nonzero eigenvalue of
the operator $u\rightarrow P_{g}(u)=\Delta _{g}^{2}u-div\left( \frac{a}{\rho
^{\sigma }}\nabla _{g}u\right) +\frac{bu}{\rho ^{\mu }}$, then clearly $%
\Lambda _{\sigma ,\mu }\geq \nu _{1,\sigma ,\mu }$. Suppose by absurd that $%
\lim_{\left( \sigma ,\mu \right) \rightarrow \left( 2^{-},4^{-}\right) }\inf
\Lambda _{\sigma ,\mu }=0$, then $\lim \inf_{\left( \sigma ,\mu \right)
\rightarrow \left( 2^{-},4^{-}\right) }\nu _{1,\sigma ,\mu }=0$.
Independently, if $u_{\sigma ,\mu }$ is the corresponding eigenfunction to $%
\nu _{1,\sigma ,\mu }$ we have%
\begin{equation*}
\nu _{1,\sigma ,\mu }=\left\Vert \Delta u\right\Vert _{2}^{2}+\int_{M}\frac{%
a\left\vert \nabla u\right\vert ^{2}}{\rho ^{\sigma }}dv_{g}+\int_{M}\frac{%
bu^{2}}{\rho ^{\mu }}dv_{g}
\end{equation*}%
\begin{equation}
\geq \left\Vert \Delta u\right\Vert _{2}^{2}+a^{-}\int \frac{\left\vert
\nabla u\right\vert ^{2}}{\rho ^{\sigma }}dv_{g}+b^{-}\int_{M}\frac{u^{2}}{%
\rho ^{\mu }}dv_{g}  \label{13}
\end{equation}%
where $a^{-}=\min \left( 0,\min_{x\in M}a(x)\right) $ and $b^{-}=\min \left(
0,\min_{x\in M}b(x)\right) $. The Hardy- Sobolev's inequality leads to%
\begin{equation*}
\int_{M}\frac{\left\vert \nabla u\right\vert ^{2}}{\rho ^{\sigma }}%
dv_{g}\leq C(\left\Vert \nabla \left\vert \nabla u\right\vert \right\Vert
^{2}+\left\Vert \nabla u\right\Vert ^{2})
\end{equation*}%
and since 
\begin{equation*}
\left\Vert \nabla \left\vert \nabla u\right\vert \right\Vert ^{2}\leq
\left\Vert \nabla ^{2}u\right\Vert ^{2}\leq \left\Vert \Delta u\right\Vert
^{2}+\beta \left\Vert \nabla u\right\Vert ^{2}
\end{equation*}%
where $\beta >0$ is a constant and it is well known that for any $%
\varepsilon >0$ there is a constant $c\left( \varepsilon \right) >0$ such
that 
\begin{equation*}
\left\Vert \nabla u\right\Vert ^{2}\leq \varepsilon \left\Vert \Delta
u\right\Vert ^{2}+c\left\Vert u\right\Vert ^{2}\text{.}
\end{equation*}%
Hence 
\begin{equation}
\int_{M}\frac{\left\vert \nabla u\right\vert ^{2}}{\rho ^{\sigma }}%
dv_{g}\leq C\left( 1+\varepsilon \right) \left\Vert \Delta u\right\Vert
^{2}+A\left( \varepsilon \right) \left\Vert u\right\Vert ^{2}  \label{14}
\end{equation}%
Now if $K(n,2,\sigma )$ denotes the best constant in inequality (\ref{14})
we get for any $\varepsilon >0$%
\begin{equation}
\int_{M}\frac{\left\vert \nabla u\right\vert ^{2}}{\rho ^{\sigma }}%
dv_{g}\leq \left( K(n,2,\sigma )^{2}+\varepsilon \right) \left\Vert \Delta
u\right\Vert ^{2}+A\left( \varepsilon ,\sigma \right) \left\Vert
u\right\Vert ^{2}\text{.}  \label{15}
\end{equation}%
By the inequalities (\ref{11}), (\ref{13}) and (\ref{15}), we have%
\begin{equation*}
\nu _{1,\sigma ,\mu }\geq \left( 1+a^{-}\max \left( K(n,2,\sigma ),A\left(
\varepsilon ,\sigma \right) \right) +b^{-}\max \left( K(n,2,\mu ),A\left(
\varepsilon ,\mu \right) \right) \right)
\end{equation*}%
\begin{equation*}
\times \left( \left\Vert \Delta u_{\sigma ,\mu }\right\Vert ^{2}+\left\Vert
u_{\sigma ,\mu }\right\Vert ^{2}\right) \text{.}
\end{equation*}%
So if 
\begin{equation*}
1+a^{-}\max \left( K(n,2,\sigma ),A\left( \varepsilon ,\sigma \right)
\right) +b^{-}\max \left( K(n,2,\mu ),A\left( \varepsilon ,\mu \right)
\right) >0
\end{equation*}%
then we get $\lim_{\sigma ,\mu }\left( u_{\sigma ,\mu }\right) =0$ and $%
\left\Vert u_{\sigma ,\mu }\right\Vert =1$ a contradiction.\ \ Denote by

\begin{equation*}
\Lambda =\lim \inf_{\sigma ,\mu }\newline
\Lambda _{\sigma ,\mu }\text{.}
\end{equation*}%
The same arguments as in the proof of Theorem\ref{thm2} we obtain that 
\begin{equation*}
a_{\lambda }^{+}\left( \frac{N-2}{Nq}-\frac{N-2}{2N}\right) ^{-1}\leq
\left\Vert u_{m}^{+}\right\Vert _{\sigma ,\mu }^{2}\leq -a_{\lambda
}^{+}\left( \frac{N-2}{Nq}-\frac{N-2}{2N}\right) ^{-1}+o(1)
\end{equation*}%
where 
\begin{equation*}
\left\Vert u^{+}\right\Vert _{\sigma ,\mu }^{2}=\left\Vert \Delta
u^{+}\right\Vert _{2}^{2}-\int_{M}\left( a(x)\frac{\left\vert \nabla
_{g}u^{+}\right\vert ^{2}}{\rho ^{\sigma }}+\frac{b(x)}{\rho ^{\mu }}\left(
u^{+}\right) ^{2}\right) dv(g)\text{.}
\end{equation*}%
It is easily seen by the Lebesque'dominated convergence theorem that $%
\left\Vert u^{+}\right\Vert _{\sigma ,\mu }$ goes to $\left\Vert
u^{+}\right\Vert _{2,4}$ as $\left( \sigma ,\mu \right) \rightarrow \left(
2,4\right) $.

Now by reflexivity of $H_{2}^{2}(M)$ and the compactness of the embedding $%
H_{2}^{2}(M)\subset H_{p}^{k}(M)$\ ( $k\ =0$,$1$; $p<N$), we obtain up to a
subsequence we have:

$u_{\sigma _{m},\mu _{m}}^{+}\rightarrow u^{+}$\ \ weakly in $H_{2}^{2}(M)$

$u_{\sigma _{m},\mu _{m}}^{+}\rightarrow u^{+}$ strongly in $L^{p}(M)$, $p<N$

$\nabla u_{\sigma _{m},\mu _{m}}^{+}\rightarrow \nabla u^{+}$ strongly in $%
L^{p}(M)$, $p<2^{\ast }=\frac{2n}{n-2}$

$u_{\sigma _{m},\mu _{m}}^{+}\rightarrow u^{+}$ a.e. in $M$.

For brevity we let $u_{m}^{+}=u_{\sigma _{m},\mu _{m}}^{+}$

The Br\'{e}zis-Lieb lemma allows us to write 
\begin{equation*}
\int_{M}\left( \Delta _{g}u_{m}^{+}\right) ^{2}dv_{g}=\int_{M}\left( \Delta
_{g}u^{+}\right) ^{2}dv_{g}+\int_{M}\left( \Delta
_{g}(u_{m}^{+}-u^{+})\right) ^{2}dv_{g}+o(1)
\end{equation*}%
and also 
\begin{equation*}
\int_{M}f(x)\left\vert u_{m}^{+}\right\vert
^{N}dv_{g}=\int_{M}f(x)\left\vert u^{+}\right\vert
^{N}dv_{g}+\int_{M}f(x)\left\vert u_{m}^{+}-u^{+}\right\vert ^{N}dv_{g}+o(1)%
\text{.}
\end{equation*}%
Now by the boundedness of the sequence ($u_{m}^{+})_{m}$, we have that $%
u_{m}^{+}\rightarrow u^{+}$\ \ weakly in $H_{2}^{2}(M)$, $\nabla
u_{m}^{+}\rightarrow \nabla u^{+}$\ \ weakly in $L^{2}(M,\rho ^{-2})$ and $%
u_{m}^{+}\rightarrow u^{+}$\ \ weakly in $L^{2}(M,\rho ^{-4})$\ i.e. for any 
$\varphi \in L^{2}(M)$If $\delta \in \left( 0,\delta (M)\right) $ then we
obtain for every $\varphi \in H_{2}^{2}\left( M\right) $ 
\begin{equation}
\int_{M}\frac{b(x)}{\rho ^{\mu _{m}}}\left( u_{m}^{+}\right) ^{2}\varphi
dv(g)=\int_{B_{P}(\delta )}\frac{b(x)}{\rho ^{\mu _{m}}}\left(
u_{m}^{+}\right) ^{2}dv(g)+\int_{M-B_{P}(\delta )}\frac{b(x)}{\rho ^{\mu
_{m}}}\left( u_{m}^{+}\right) ^{2}dv(g)  \label{11'}
\end{equation}%
and 
\begin{equation*}
\int_{M}\frac{b(x)}{\rho ^{\delta _{m}}}\left( u^{+}\right)
^{2}dv(g)=\int_{M}\frac{b(x)}{\rho ^{4}}\left( u^{+}\right) ^{2}dv(g)+o(1)\
\ \text{when}\ \ \delta _{m}\rightarrow 4^{-}\text{.}
\end{equation*}%
Now the fact $u_{m}^{+}\rightarrow u^{+}$\ \ weakly in $H_{2}^{2}(M)$, $%
\nabla u_{m}^{+}\rightarrow \nabla u^{+}$\ weakly in $L^{2}(M,\rho ^{-2})$
and$\ u_{m}^{+}\rightarrow u^{+}$ weakly in $L^{2}(M,\rho ^{-4})$\ expresses
as: for all $\varphi \in L^{2}(M):$%
\begin{equation*}
\int_{M}\frac{a(x)}{\rho ^{2}}\nabla u_{m}^{+}\nabla \varphi dv(g)=\int_{M}%
\frac{a(x)}{\rho ^{2}}\nabla u^{+}\nabla \varphi dv(g)+o(1)
\end{equation*}%
and%
\begin{equation*}
\int_{M}\frac{b(x)}{\rho ^{4}}u_{m}^{+}\varphi dv(g)=\int_{M}\frac{b(x)}{%
\rho ^{4}}u^{+}\varphi dv(g)+o(1).
\end{equation*}%
Consequently $u^{+}$ is a weak solution to equation (\ref{12'}).

Since $u_{m}^{+}\rightarrow u^{+}$weakly in $H_{2}^{2}(M)$, we have for all $%
\phi \in L^{2}(M)$%
\begin{equation*}
\int_{M}\left( u_{m}^{+}-u^{+}\right) \Delta _{g}^{2}\phi dv(g)=o(1)
\end{equation*}%
then,%
\begin{equation*}
\int_{M}u_{m}^{+}\Delta _{g}^{2}\phi dv(g)=\int_{M}\Delta _{g}\phi \Delta
_{g}u^{+}dv(g)+o(1)\text{.}
\end{equation*}%
For the second integral, we obtain%
\begin{equation*}
\int_{M}\left( \frac{a(x)}{\rho ^{\sigma _{m}}}\nabla _{g}u_{m}^{+}-\frac{%
a(x)}{\rho ^{2}}\nabla _{g}u^{+}\right) \nabla \phi dv(g)=
\end{equation*}%
\begin{equation*}
\int_{M}\left( \frac{a(x)}{\rho ^{\sigma _{m}}}\nabla _{g}u_{m}^{+}+\frac{%
a(x)}{\rho ^{2}}\left( \nabla _{g}u_{m}^{+}-\nabla _{g}u_{m}^{+}\right) -%
\frac{a(x)}{\rho ^{2}}\nabla _{g}u^{+}\right) \nabla \phi dv(g)\text{.}
\end{equation*}%
Consequently%
\begin{equation*}
\left\vert \int_{M}\left( \frac{a(x)}{\rho ^{\sigma _{m}}}\nabla
_{g}u_{m}^{+}-\frac{a(x)}{\rho ^{2}}\nabla _{g}u^{+}\right) \nabla \phi
dv(g)\right\vert \leq
\end{equation*}%
\begin{equation*}
\left\vert \int_{M}\left( \frac{a(x)}{\rho ^{\sigma _{m}}}\nabla
_{g}u_{m}^{+}-\frac{a(x)}{\rho ^{2}}\nabla _{g}u_{m}^{+}\right) \nabla \phi
dv(g)\right\vert +\left\vert \int_{M}\left( \frac{a(x)}{\rho ^{2}}\nabla
_{g}u_{m}^{+}-\frac{a(x)}{\rho ^{2}}\nabla _{g}u^{+}\right) \nabla \phi
dv(g)\right\vert
\end{equation*}%
\begin{equation*}
\leq \left\vert \int_{M}\frac{a(x)}{\rho ^{2}}\nabla _{g}\left(
u_{m}^{+}-u^{+}\right) \nabla \phi dv(g)\right\vert +\int_{M}\left\vert
a(x)\nabla \phi \nabla _{g}u_{m}^{+}\right\vert \left\vert \frac{1}{\rho
^{\sigma _{m}}}-\frac{1}{\rho ^{2}}\right\vert dv(g)\text{.}
\end{equation*}%
By the weak convergence in $L^{2}(M,\rho ^{-2})$ and the dominated
Lebesgue's convergence theorem, we obtain that 
\begin{equation*}
\int_{M}\left( \frac{a(x)}{\rho ^{\sigma _{m}}}\nabla _{g}u_{m}^{+}-\frac{%
a(x)}{\rho ^{2}}\nabla _{g}u^{+}\right) \nabla \phi dv(g)=o(1)\text{.}
\end{equation*}%
The third integral splits as%
\begin{equation*}
\int_{M}\left( \frac{b(x)}{\rho ^{\mu _{m}}}u_{m}^{+}-\frac{b(x)}{\rho ^{4}}%
u^{+}\right) \phi dv(g)=
\end{equation*}%
\begin{equation*}
\int_{M}\left( \frac{b(x)}{\rho ^{\mu _{m}}}u_{m}^{+}-\frac{b(x)}{\rho ^{4}}%
u_{m}^{+}+\frac{b(x)}{\rho ^{4}}u_{m}^{+}-\frac{b(x)}{\rho ^{4}}u^{+}\right)
\phi dv(g)
\end{equation*}%
so%
\begin{equation*}
\left\vert \int_{M}\left( \frac{b(x)}{\rho ^{\mu _{m}}}u_{m}^{+}-\frac{b(x)}{%
\rho ^{4}}u^{+}\right) \phi dv(g)\right\vert
\end{equation*}%
\begin{equation*}
\leq \int_{M}\left\vert b(x)\phi u_{m}\right\vert \left\vert \frac{1}{\rho
^{\mu _{m}}}-\frac{1}{\rho ^{4}}\right\vert dv(g)+\left\vert \int_{M}\frac{%
b(x)}{\rho ^{4}}\left( u_{m}^{+}-u^{+}\right) \phi dv(g)\right\vert
\end{equation*}%
and by the same arguments, we obtain that%
\begin{equation*}
\int_{M}\left( \frac{b(x)}{\rho ^{\delta _{m}}}u_{m}^{+}-\frac{b(x)}{\rho
^{4}}u^{+}\right) \phi dv(g)=o(1)\text{ \ \ .}
\end{equation*}

It remains to show that $\mu _{m}\rightarrow 0$ as $m$ $\rightarrow +\infty $
and $u_{m}^{+}\rightarrow u^{+}$ strongly in $H_{2}^{2}\left( M\right) $ but
this is the same as in the proof of Theorem \ref{thm3}.

Consequently $u^{+}$ is a nontrivial solution in $N_{\lambda }^{+}$ of
equation .\newline
\end{proof}

\section{Test Functions}

To give the proof of the main result, we consider a normal geodesic
coordinate system centred at $x_{o}$.\ Let $S_{x_{o}}(\rho )$ the geodesic
sphere centred at $x_{o}$ and of radius $\rho $ strictly less that the
injectivity radius $d$. Let $dv_{h}$ be the volume element of the $n-1$%
-dimensional Euclidean unit sphere $S^{n-1}$ endowed with its canonical
metric and put 
\begin{equation*}
G(\rho )=\frac{1}{\omega _{n-1}}\dint\limits_{S(\rho )}\sqrt{\left\vert
g(x)\right\vert }dv_{h}
\end{equation*}%
where $\omega _{n-1}$ is the volume of $S^{n-1}$ and $\left\vert
g(x)\right\vert $ the determinant of the Riemannian metric $g$. The Taylor's
expansion of $G(\rho )$ in a neighborhood of $x_{o}$ expresses as 
\begin{equation*}
G(\rho )=1-\frac{S_{g}(x_{\circ })}{6n}\rho ^{2}+o(\rho ^{2})
\end{equation*}%
where $S_{g}(x_{\circ })$ is the scalar curvature of $M$ at $x_{\circ }$.$%
\newline
$If $B(x_{\circ },\delta )$ is the geodesic ball centred at $x_{\circ }$ and
of radius $\delta $ such that $0<2\delta <d$, we consider the following
cutoff smooth function $\eta $ on $M$ 
\begin{equation*}
\eta (x)=\left\{ 
\begin{array}{c}
1\text{ \ \ \ \ \ \ \ \ on }B(x_{o},\delta ) \\ 
0\text{ \ \ on }M-B(x_{o},2\delta )%
\end{array}%
\right. \text{.}
\end{equation*}%
Define the following radial function

\begin{equation*}
u_{\epsilon }(x)=(\frac{(n-4)n(n^{2}-4)\epsilon ^{4}}{f(x_{\circ })})^{\frac{%
n-4}{8}}\frac{\eta (\rho )}{(\left( \rho \theta \right) ^{2}+\epsilon ^{2})^{%
\frac{n-4}{2}}}
\end{equation*}%
with%
\begin{equation*}
\theta =\left( 1+\left\Vert \frac{a}{\rho ^{\sigma }}\right\Vert
_{r}+\left\Vert \frac{b}{\rho ^{\mu }}\right\Vert _{s}\right) ^{\frac{1}{n}}
\end{equation*}%
where $\rho =d(x_{o},x)$ is the distance from $x_{o}$ to $x$ and $f(x_{\circ
})=\max_{x\in M}f(x)$. We need also the following integrals: for any real
positive numbers $p$, $q$ such that $p-q>1$ we put

\begin{equation*}
I_{p}^{q}=\int_{0}^{+\infty }\frac{t^{q}}{(1+t)^{p}}dt
\end{equation*}%
which fulfill the following relations 
\begin{equation*}
I_{p+1}^{q}=\frac{p-q-1}{p}I_{p}^{q}\text{ \ \ \ and\ \ \ \ \ }I_{p+1}^{q+1}=%
\frac{q+1}{p-q-1}I_{p+1}^{q}\text{.}
\end{equation*}

\section{Application to compact Riemannian manifolds of dimension $n>6$}

\begin{theorem}
\label{thm5} Let $\left( M,g\right) $ be a compact Riemannian manifold of
dimension $\ n>6$. Suppose that at a point $x_{o}$ where $f$ attains its
maximum the following condition%
\begin{equation*}
\frac{\Delta f(x_{o})}{f\left( x_{o}\right) }<\frac{1}{3}\left( \frac{%
(n-1)n\left( n^{2}+4n-20\right) }{\left( n^{2}-4\right) \left( n-4\right)
\left( n-6\right) }\frac{1}{\left( 1+\left\Vert \frac{a}{\rho ^{\sigma }}%
\right\Vert _{r}+\left\Vert \frac{b}{\rho ^{\mu }}\right\Vert _{s}\right) ^{%
\frac{4}{n}}}-1\right) S_{g}\left( x_{o}\right)
\end{equation*}%
holds . Then the equation (\ref{1}) has at least two non trivial solutions.
\end{theorem}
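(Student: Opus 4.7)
The plan is to apply Theorems \ref{thm2} and \ref{thm3} with the Palais--Smale level $c$ replaced by the Nehari minimax values $\alpha_\lambda^+$ and $\alpha_\lambda^-$. For $\lambda\in(0,\lambda_\circ)$, Lemma \ref{lem4} already gives $\alpha_\lambda^+<0$, which is trivially below the threshold (\ref{C1}); Theorem \ref{thm2} therefore produces a first solution $u^+\in N_\lambda^+$ with no extra work. All the difficulty lies in verifying
\begin{equation*}
\alpha_\lambda^-\;<\;\frac{2}{n\,K_\circ^{n/4}(f(x_\circ))^{(n-4)/4}}
\end{equation*}
for $\lambda$ small. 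Once this is done, Theorem \ref{thm3} gives $u^-\in N_\lambda^-$, and the two solutions are distinct because $N_\lambda^+\cap N_\lambda^-=\emptyset$.

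To bound $\alpha_\lambda^-$ from above I would test with $tu_\epsilon$ along the family introduced in Section~5 and maximise in $t>0$. Since $q<2<N$, the function $t\mapsto J_\lambda(tu_\epsilon)$ attains its maximum at a unique $t_\epsilon^->0$, and exactly as in the discussion following Lemma \ref{lem5} one checks that $t_\epsilon^- u_\epsilon\in N_\lambda^-$. Dropping the $\lambda$-term gives the cruder bound
\begin{equation*}
\sup_{t\ge 0}J_\lambda(tu_\epsilon)\;\le\;\frac{2}{n}\Bigl(\frac{\|u_\epsilon\|^2}{(\int_M f|u_\epsilon|^N\,dv_g)^{2/N}}\Bigr)^{n/4}-C\lambda^{2/(2-q)}\|u_\epsilon\|_q^q,
\end{equation*}
with $C>0$ independent of $\epsilon$ and $\lambda$, so the task reduces to a sharp estimate of the Sobolev-type quotient appearing above.

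The core step is the asymptotic expansion as $\epsilon\to0$ of the three integrals $\|\Delta_g u_\epsilon\|_2^2$, $\int_M f|u_\epsilon|^N\,dv_g$ and $\|u_\epsilon\|_q^q$, performed in normal coordinates at $x_\circ$ using $G(\rho)=1-\frac{S_g(x_\circ)}{6n}\rho^2+o(\rho^2)$ and $f(x)=f(x_\circ)+\frac{1}{2}\Delta f(x_\circ)\rho^2+o(\rho^2)$. After the substitution $\rho=\epsilon t$, each term reduces to an Euclidean integral expressible through the $I_p^q$ constants defined in Section~5. The singular perturbations $\int a\rho^{-\sigma}|\nabla u_\epsilon|^2\,dv_g$ and $\int b\rho^{-\mu}u_\epsilon^2\,dv_g$ are handled by H\"older with exponents $r$ and $s$ together with the Hardy--Sobolev inequality of Lemma~\ref{Lem}; this is exactly what the normalization factor $\theta=(1+\|a/\rho^\sigma\|_r+\|b/\rho^\mu\|_s)^{1/n}$ in the definition of $u_\epsilon$ is designed to absorb. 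The leading-order Sobolev quotient then equals $K_\circ^{-1}(f(x_\circ))^{-(n-4)/n}(1+\|a/\rho^\sigma\|_r+\|b/\rho^\mu\|_s)^{-4/n}$, and its $O(\epsilon^2)$ correction takes the form of a fixed positive constant times
\begin{equation*}
\frac{(n-1)n(n^2+4n-20)}{(n^2-4)(n-4)(n-6)}\,\frac{S_g(x_\circ)}{(1+\|a/\rho^\sigma\|_r+\|b/\rho^\mu\|_s)^{4/n}}-S_g(x_\circ)-3\,\frac{\Delta f(x_\circ)}{f(x_\circ)}.
\end{equation*}
The hypothesis of the theorem is precisely the condition that this quantity is strictly positive, so for $\epsilon$ sufficiently small the strict inequality $\sup_{t\ge0}J_\lambda(tu_\epsilon)<$ threshold holds, whence $\alpha_\lambda^-<$ threshold and Theorem~\ref{thm3} applies.

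The main obstacle is the last calculation: one must track simultaneously the scalar curvature correction coming from $G(\rho)$ and from the expansion of the Laplace--Beltrami operator, the $\Delta f$ correction coming from Taylor-expanding $f$, and the contribution of the singular lower-order terms, without spoiling the sharp numerical constant $(n^2+4n-20)/((n^2-4)(n-4)(n-6))$. The arithmetic of $I_p^q$-identities in the final paragraph of Section~5 is what makes this coefficient appear exactly in the form required. Once this balance is verified, everything else is routine: Theorems \ref{thm2} and \ref{thm3} together with the preceding lemmas on the structure of $N_\lambda^\pm$ finish the proof, and the distinctness of $u^+$ and $u^-$ is guaranteed by $N_\lambda^+\cap N_\lambda^-=\emptyset$.
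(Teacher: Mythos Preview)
Your proposal is essentially correct and follows the same strategy as the paper: reduce everything to the threshold inequality (\ref{C1})$=$(\ref{C2}), test with $tu_\epsilon$, expand each integral to order $\epsilon^2$ in normal coordinates, and check that the hypothesis on $\Delta f(x_\circ)/f(x_\circ)$ makes the $\epsilon^2$-coefficient have the right sign.

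A few differences worth flagging. First, the paper does not bother with the refined bound $\sup_t J_\lambda(tu_\epsilon)\le \frac{2}{n}(\cdots)^{n/4}-C\lambda^{2/(2-q)}\|u_\epsilon\|_q^q$; it simply uses $J_\lambda(tu_\epsilon)\le J_0(tu_\epsilon)$ (the $\lambda$-term is nonpositive) and maximises the two-term function $\varphi(t)=\alpha t^2/2-t^N/N$ exactly, obtaining $\varphi(t_0)=\frac{2}{n}\alpha^{n/4}$. Your extra $\lambda$-correction is harmless but unnecessary here. Second, the paper does not phrase the leading term as a ``Sobolev quotient'': because of the $\theta$-rescaling, both $\int(\Delta u_\epsilon)^2$ and $\int f|u_\epsilon|^N$ have the \emph{same} leading constant $\theta^{-n}/(K_\circ^{n/4}f(x_\circ)^{(n-4)/4})$, and the factor $(1+\|a/\rho^\sigma\|_r+\|b/\rho^\mu\|_s)^{4/n}$ enters only as an upper bound for the bracket $1+\epsilon^{2-n/r}\theta^{-n/(r-1)}A\|a/\rho^\sigma\|_r+\epsilon^{4-n/s}\theta^{-n/(s-1)}B\|b/\rho^\mu\|_s$ after choosing $\epsilon$ small; this is how the $\theta^{-n}$ and $\theta^{n}=\alpha^{n/4}$ cancel to give exactly the threshold. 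Third, the singular lower-order terms are controlled in the paper purely by H\"older with exponents $r,s$ (the estimates are quoted from \cite{7}); the Hardy--Sobolev inequality of Lemma~\ref{Lem} is not invoked at this stage. Finally, your observation that $\alpha_\lambda^+<0$ already lies below the threshold is a clean shortcut the paper leaves implicit---it treats (\ref{C1}) and (\ref{C2}) as a single condition to be checked once via $\sup_{t>0}J_\lambda(tu_\epsilon)$.
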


\begin{proof}
The proof of Theorem \ref{Th1} reduces to show that the condition (\ref{C1})
of Theorem \ref{thm2} \ which is the same condition (\ref{C2}) of Theorem %
\ref{ref2} is satisfied and since at the end of section 1, we have shown
that for a given $u\in H_{2}^{2}\left( M\right) $ there exist two real
numbers $t^{-}>0$ and $t^{+}>0$ such that $t^{-}u\in N_{\lambda }^{-}$ and $%
t^{+}u\in N_{\lambda }^{+}$ \ for sufficiently small $\lambda $, so it
suffices to show that%
\begin{equation*}
\sup_{t>0}J_{\lambda }\left( tu_{\epsilon }\right) <\frac{1}{K_{\circ }^{%
\frac{n}{4}}\left( \max_{x\in M}f(x)\right) ^{\frac{n}{4}-1}}\text{.}
\end{equation*}%
The expression of $\int_{M}f(x)\left\vert u_{\epsilon }(x)\right\vert
^{N}dv_{g}$ is well known (see for example \cite{10} ) and is given in case $%
n>6$ by%
\begin{equation*}
\int_{M}f(x)\left\vert u_{\epsilon }(x)\right\vert ^{N}dv_{g}=\frac{\theta
^{-n}}{K_{\circ }^{\frac{n}{4}}(f(x_{\circ }))^{\frac{n-4}{4}}}\left( 1-(%
\frac{\Delta f(x_{\circ })}{2(n-2)f(x_{\circ })}+\frac{S_{g}(x_{\circ })}{%
6(n-2)})\epsilon ^{2}+o(\epsilon ^{2})\right) \text{.}
\end{equation*}%
The following estimation is computed in \cite{7} and is given by $\ $%
\begin{equation*}
\int_{M}\frac{a(x)}{\rho ^{\sigma }}\left\vert \nabla u_{\epsilon
}\right\vert ^{2}dv_{g}\leq
\end{equation*}%
\begin{equation*}
2^{-1+\frac{1}{r}}\theta ^{-n\frac{r}{r-1}}(n-4)^{2}\left( \frac{%
(n-4)n(n^{2}-4)\epsilon ^{4}}{f(x_{\circ })}\right) ^{\frac{n-4}{4}%
}\left\Vert \frac{a}{\rho ^{\sigma }}\right\Vert _{r}\omega _{n-1}^{1-\frac{1%
}{r}}\epsilon ^{-\left( n-4\right) +2-\frac{n}{r}}
\end{equation*}%
\begin{equation*}
\times \left( I_{\frac{\left( n-2\right) r}{r-1}}^{1+\frac{n-2}{2}.\frac{r-1%
}{r}}+o(\epsilon ^{2})\right) \text{.}
\end{equation*}%
Letting 
\begin{equation}
A=K_{\circ }^{\frac{n}{4}}\frac{(n-4)^{\frac{n}{4}+1}\times \left( \omega
_{n-1}\right) ^{\frac{r-1}{r}}}{2^{\frac{r-1}{r}}}(n(n^{2}-4))^{\frac{n-4}{4}%
}\left( I_{\frac{\left( n-2\right) r}{r-1}}^{\frac{n-2}{2}+\frac{r}{r-1}%
}\right) ^{\frac{r-1}{r}}  \label{20}
\end{equation}%
we obtain 
\begin{equation*}
\int_{M}a(x)\left\vert \nabla u_{\epsilon }\right\vert ^{2}dv_{g}\leq
\epsilon ^{2-\frac{n}{r}}\theta ^{-n\frac{r}{r-1}}\frac{A}{\text{ }K_{\circ
}^{\frac{n}{4}}(f(x_{\circ }))^{\frac{n-4}{4}}}\left\Vert \frac{a}{\rho
^{\sigma }}\right\Vert _{r}\left( 1+o(\epsilon ^{2})\right) \text{.}
\end{equation*}%
Also the estimation of the third term of $J_{\lambda }$ is computed in \cite%
{7} as 
\begin{equation*}
\int_{M}\frac{b(x)}{\rho ^{\mu }}u_{\epsilon }^{2}dv_{g}\leq \left\Vert
b\right\Vert _{s}\left( \frac{(n-4)n(n^{2}-4)}{f(x_{\circ })}\right) ^{\frac{%
n-4}{4}}(\frac{\omega _{n-1}}{2})^{\frac{s-1}{s}}\epsilon ^{4-\frac{n}{s}%
}\theta ^{-n\frac{s}{s-1}}
\end{equation*}%
\begin{equation*}
\times \left( \left( I_{\frac{\left( n-4\right) s}{(s-1)}}^{\frac{n}{2}%
}\right) ^{\frac{s-1}{s}}+o(\epsilon ^{2})\right)
\end{equation*}%
Putting 
\begin{equation}
B=K_{\circ }^{\frac{n}{4}}((n-4)n(n^{2}-4))^{\frac{n-4}{4}}(\frac{\omega
_{n-1}}{2})^{\frac{s-1}{s}}\left( I_{\frac{\left( n-4\right) s}{(s-1)}}^{%
\frac{n}{2}}\right) ^{\frac{s-1}{s}}  \label{21}
\end{equation}%
we get%
\begin{equation*}
\int_{M}b(x)u_{\epsilon }^{2}dv_{g}\leq \epsilon ^{4-\frac{n}{s}}\theta ^{-n%
\frac{s}{s-1}}\frac{\left\Vert \frac{b}{\rho ^{\mu }}\right\Vert _{s}B}{%
\text{ }K_{\circ }^{\frac{n}{4}}(f(x_{\circ }))^{\frac{n-4}{4}}}\left(
1+o(\epsilon ^{2})\right) \text{.}
\end{equation*}%
The computation of $\int_{M}\left( \Delta u_{\epsilon }\right) ^{2}dv_{g}$
is well known see for example (\cite{10})\ and is given by%
\begin{equation*}
\int_{M}\left( \Delta u_{\epsilon }\right) ^{2}dv_{g}=\frac{\theta ^{-n}}{%
K_{\circ }^{\frac{n}{4}}(f(x_{\circ }))^{\frac{n-4}{4}}}\left( 1-\frac{%
n^{2}+4n-20}{6(n^{2}-4)(n-6)}S_{g}(x_{\circ })\epsilon ^{2}+o(\epsilon
^{2})\right) \text{.}
\end{equation*}%
Resuming we get%
\begin{equation*}
\int_{M}\left( \Delta u_{\epsilon }\right) ^{2}-a(x)\left\vert \nabla
u_{\epsilon }\right\vert ^{2}+b(x)u_{\epsilon }^{2}dv_{g}\leq \frac{\theta
^{-n}}{K_{\circ }^{\frac{n}{4}}f(x_{\circ })^{\frac{n-4}{4}}}\times
\end{equation*}%
\begin{equation*}
\left( 1+\epsilon ^{2-\frac{n}{r}}\theta ^{-\frac{n}{r-1}}A\left\Vert \frac{a%
}{\rho ^{\sigma }}\right\Vert _{r}+\epsilon ^{4-\frac{n}{s}}\theta ^{-\frac{n%
}{s-1}}B\left\Vert \frac{b}{\rho ^{\mu }}\right\Vert _{s}-\frac{n^{2}+4n-20}{%
6(n^{2}-4)(n-6)}S_{g}(x_{\circ })\epsilon ^{2}+o(\epsilon ^{2})\right) \text{%
.}
\end{equation*}%
Now, we have 
\begin{equation*}
J_{\lambda }\left( tu_{\epsilon }\right) \leq J_{o}\left( tu_{\epsilon
}\right) =\frac{t^{2}}{2}\left\Vert u_{\epsilon }\right\Vert ^{2}-\frac{t^{N}%
}{N}\int_{M}f(x)\left\vert u_{\epsilon }(x)\right\vert ^{N}dv_{g}
\end{equation*}%
\begin{equation*}
\leq \frac{\theta ^{-n}}{K_{\circ }^{\frac{n}{4}}f(x_{\circ })^{\frac{n-4}{4}%
}}\left\{ \frac{1}{2}t^{2}\left( 1+\epsilon ^{2-\frac{n}{r}}\theta ^{-\frac{n%
}{r-1}}A\left\Vert \frac{a}{\rho ^{\sigma }}\right\Vert _{r}+\epsilon ^{4-%
\frac{n}{s}}\theta ^{-\frac{n}{s-1}}B\left\Vert \frac{b}{\rho ^{\mu }}%
\right\Vert _{s}\right) -\frac{t^{N}}{N}\right.
\end{equation*}%
\begin{equation*}
\left. +\left[ \left( \frac{\Delta f(x_{o})}{2\left( n-2\right) f(x_{o})}+%
\frac{S_{g}\left( x_{o}\right) }{6\left( n-1\right) }\right) \frac{t^{N}}{N}-%
\frac{1}{2}t^{2}\frac{n^{2}+4n-20}{6\left( n^{2}-4\right) \left( n-6\right) }%
S_{g}\left( x_{o}\right) \right] \epsilon ^{2}\right\}
\end{equation*}%
\begin{equation*}
+o\left( \epsilon ^{2}\right)
\end{equation*}%
and letting $\epsilon $ small enough so that%
\begin{equation*}
1+\epsilon ^{2-\frac{n}{r}}\theta ^{-\frac{n}{r-1}}A\left\Vert \frac{a}{\rho
^{\sigma }}\right\Vert _{r}+\epsilon ^{4-\frac{n}{s}}\theta ^{-\frac{n}{s-1}%
}B\left\Vert \frac{b}{\rho ^{\mu }}\right\Vert _{s}\leq \left( 1+\left\Vert 
\frac{a}{\rho ^{\sigma }}\right\Vert _{r}+\left\Vert \frac{b}{\rho ^{\mu }}%
\right\Vert _{s}\right) ^{\frac{4}{n}}
\end{equation*}%
and since the function $\varphi (t)=\alpha \frac{t^{2}}{2}-\frac{t^{N}}{N}$,
with $\alpha >0$ and $t>0$, attains its maximum at $t_{o}=\alpha ^{\frac{1}{%
N-2}}$ and%
\begin{equation*}
\varphi (t_{o})=\frac{2}{n}\alpha ^{\frac{n}{4}}\text{.}
\end{equation*}%
Consequently, we get%
\begin{equation*}
J_{\lambda }\left( tu_{\epsilon }\right) \leq \frac{2\theta ^{-n}}{nK_{\circ
}^{\frac{n}{4}}f(x_{\circ })^{\frac{n-4}{4}}}\left\{ 1+\left\Vert \frac{a}{%
\rho ^{\sigma }}\right\Vert _{r}+\left\Vert \frac{b}{\rho ^{\mu }}%
\right\Vert _{s}\right.
\end{equation*}%
\begin{equation*}
\left. +\left[ \left( \frac{\Delta f(x_{o})}{2\left( n-2\right) f(x_{o})}+%
\frac{S_{g}\left( x_{o}\right) }{6\left( n-1\right) }\right) \frac{t_{o}^{N}%
}{N}-\frac{1}{2}t_{o}^{2}\frac{n^{2}+4n-20}{6\left( n^{2}-4\right) \left(
n-6\right) }S_{g}\left( x_{o}\right) \right] \epsilon ^{2}\right\}
\end{equation*}%
\begin{equation*}
+o\left( \epsilon ^{2}\right) .
\end{equation*}

Taking account of the value of $\theta $ and putting%
\begin{equation*}
R(t)=\left( \frac{\Delta f(x_{o})}{2\left( n-2\right) f(x_{o})}+\frac{%
S_{g}\left( x_{o}\right) }{6\left( n-1\right) }\right) \frac{t^{N}}{N}-\frac{%
1}{2}\frac{n^{2}+4n-20}{6\left( n^{2}-4\right) \left( n-6\right) }%
S_{g}\left( x_{o}\right) t^{2}
\end{equation*}%
we obtain 
\begin{equation*}
\sup_{t\geq 0}J_{\lambda }\left( tu_{\epsilon }\right) <\frac{2}{nK_{\circ
}^{\frac{n}{4}}\left( \max_{x\in M}f(x)\right) ^{\frac{n}{4}-1}}
\end{equation*}%
provided that $R(t_{o})<0$ i.e.%
\begin{equation*}
\frac{\Delta f(x_{o})}{f\left( x_{o}\right) }<\left( \frac{n\left(
n^{2}+4n-20\right) }{3\left( n+2\right) \left( n-4\right) \left( n-6\right) }%
\frac{1}{\left( 1+\left\Vert \frac{a}{\rho ^{\sigma }}\right\Vert
_{r}+\left\Vert \frac{b}{\rho ^{\mu }}\right\Vert _{s}\right) ^{\frac{4}{n}}}%
-\frac{n-2}{3\left( n-1\right) }\right) S_{g}\left( x_{o}\right) \text{.}
\end{equation*}%
Which completes the proof.
\end{proof}

\section{Application to compact Riemannian manifolds of dimension $n=6$}

\begin{theorem}
In case $n=6$, we suppose that at a point $x_{o}$ where $f$ attains its
maximum $S_{g}\left( x_{o}\right) >0$. Then the equation (\ref{1}) has at
least two distinct non trivial solutions in the distribution sense..
\end{theorem}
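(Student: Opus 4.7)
The plan is to reduce, exactly as in the proof of Theorem \ref{thm5}, the existence of two distinct nontrivial critical points to verifying the mountain-pass bound
\[
\sup_{t > 0} J_\lambda(t u_\epsilon) < \frac{2}{n\,K_\circ^{n/4}\bigl(\max_{x\in M} f(x)\bigr)^{(n-4)/4}}
\]
for the test function $u_\epsilon$ centred at $x_\circ$, specialised now to $n=6$. Once this bound is known, Theorems \ref{thm2} and \ref{thm3} together produce two distinct solutions $u^{+}\in N_\lambda^{+}$ and $u^{-}\in N_\lambda^{-}$ of equation (\ref{3'}), which are distinct since $N_\lambda^{+}\cap N_\lambda^{-}=\emptyset$.

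First I would expand each of the four integrals $\int_M(\Delta u_\epsilon)^2\,dv_g$, $\int_M a\rho^{-\sigma}|\nabla u_\epsilon|^2\,dv_g$, $\int_M b\rho^{-\mu}u_\epsilon^2\,dv_g$ and $\int_M f|u_\epsilon|^N\,dv_g$ as $\epsilon\to 0$. The essential novelty in dimension six is that $N = 2n/(n-4) = 6 = n$, so the bi-Laplacian term does not admit a clean $\epsilon^2$ expansion with bounded coefficient; instead, following the standard computations in \cite{10}, it acquires a logarithmic correction of the form $-\,\kappa\, S_g(x_\circ)\,\epsilon^2\log(1/\epsilon)$ with a positive dimensional constant $\kappa$ as the dominant subleading term, while the nonlinearity expansion no longer contains a $\Delta f(x_\circ)$ contribution at that order. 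The estimates from \cite{7} for the weighted gradient and zero-order terms remain valid and, thanks to $0<\sigma<2$ and $0<\mu<4$, are of lower order than $\epsilon^2\log(1/\epsilon)$ after being weighted by the renormalising factor $\theta^{-n}$.

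Next I would maximise $\varphi(t)=\tfrac{t^2}{2}\|u_\epsilon\|^2-\tfrac{t^N}{N}\int_M f|u_\epsilon|^N\,dv_g$ over $t>0$. The maximiser satisfies $t_\circ = 1 + o(1)$, which substituted into the expansions yields
\[
\sup_{t > 0} J_\lambda(t u_\epsilon) \le \frac{2}{6\, K_\circ^{3/2} f(x_\circ)^{1/2}}\,\Bigl(1 - c\, S_g(x_\circ)\,\epsilon^2\log(1/\epsilon) + o\bigl(\epsilon^2\log(1/\epsilon)\bigr)\Bigr)
\]
for some $c>0$. Under the hypothesis $S_g(x_\circ)>0$, the log-enhanced term is strictly negative and dominates every $O(\epsilon^2)$ remainder, so for all sufficiently small $\epsilon$ the required strict mountain-pass inequality holds, and together with the analysis of the fibering $t\mapsto t u_\epsilon$ performed at the end of Section 1 we obtain critical points in both $N_\lambda^{+}$ and $N_\lambda^{-}$.

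The main obstacle is controlling the logarithmic term in the bi-Laplacian expansion: one must carefully track its coefficient and verify that the perturbations coming from $a\rho^{-\sigma}$, $b\rho^{-\mu}$ and from $\Delta f(x_\circ)$ really do sit at the subdominant order $\epsilon^2$, so that no compensating logarithmic contribution arises from these singular weight terms and the positivity of $S_g(x_\circ)$ alone suffices to beat the threshold. If this verification succeeds, the bare condition $S_g(x_\circ)>0$ is enough, which is precisely why the dimension-six statement is cleaner than its higher-dimensional analogue Theorem \ref{thm5}.
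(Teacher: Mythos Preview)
Your proposal is correct and follows essentially the same route as the paper: both isolate the logarithmic correction $-c\,S_g(x_\circ)\,\epsilon^{2}\log(1/\epsilon)$ in the expansion of $\int_M(\Delta u_\epsilon)^2\,dv_g$ for $n=6$, observe that the contributions from $a\rho^{-\sigma}$, $b\rho^{-\mu}$ and $\Delta f(x_\circ)$ are all $O(\epsilon^{2})$ and hence subdominant, and conclude that $S_g(x_\circ)>0$ alone forces the strict mountain-pass inequality. The paper's argument is terser---it simply quotes the relevant expansions and writes down the final bound---whereas you spell out more explicitly why the logarithm wins; but the strategy and the key computation are the same.
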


\begin{proof}
In case $n=6$ the only term whose expression differs from the case $n>6$ is
the first term of $J_{\lambda }$ and is given ( see for example \cite{10}) by

\begin{equation*}
\int_{M}\left( \Delta u_{\epsilon }\right) ^{2}dv_{g}=\frac{\theta ^{n}}{%
K_{\circ }^{\frac{n}{4}}(f(x_{\circ }))^{\frac{n-4}{4}}}\left( 1-\frac{%
2\left( n-4\right) }{n^{2}(n^{2}-4)I_{n}^{\frac{n}{2}-1}}S_{g}(x_{\circ
})\epsilon ^{2}\log \left( \frac{1}{\epsilon ^{2}}\right) +O(\epsilon
^{2})\right) \text{.}
\end{equation*}%
Letting $\epsilon $ so that 
\begin{equation*}
1+\epsilon ^{2-\frac{n}{r}}\theta ^{-\frac{n}{r-1}}A\left\Vert \frac{a}{\rho
^{\sigma }}\right\Vert _{r}+\epsilon ^{4-\frac{n}{s}}\theta ^{-\frac{n}{s-1}%
}B\left\Vert \frac{b}{\rho ^{\mu }}\right\Vert _{s}\leq \left( 1+\left\Vert 
\frac{a}{\rho ^{\sigma }}\right\Vert _{r}+\left\Vert \frac{b}{\rho ^{\mu }}%
\right\Vert _{s}\right) ^{\frac{4}{n}}
\end{equation*}%
where $A$ and $B$ are given by (\ref{20}) and (\ref{21}), we get%
\begin{equation*}
J_{\lambda }\left( u_{\epsilon }\right) \leq \frac{1}{2}\left\Vert
u_{\epsilon }\right\Vert ^{2}-\frac{1}{N}\int_{M}f(x)\left\vert u_{\epsilon
}(x)\right\vert ^{N}dv_{g}
\end{equation*}%
\begin{equation*}
\leq \frac{\theta ^{n}}{\text{ }K_{\circ }^{\frac{n}{4}}(f(x_{\circ }))^{%
\frac{n-4}{4}}}\left[ \frac{t^{2}}{2}\left( 1+\left\Vert \frac{a}{\rho
^{\sigma }}\right\Vert _{r}+\left\Vert \frac{b}{\rho ^{\mu }}\right\Vert
_{s}\right) ^{1-\frac{4}{n}}-\frac{t^{N}}{N}\right.
\end{equation*}%
\begin{equation*}
\left. -\frac{n-4}{n^{2}\left( n^{2}-4\right) I_{n}^{\frac{n}{2}-1}}\theta
^{-2}S_{g}(x_{\circ })t^{2}\epsilon ^{2}\log \left( \frac{1}{\epsilon ^{2}}%
\right) \right] +O(\epsilon ^{2})\text{.}
\end{equation*}%
As in the case $n>6$ we infer that 
\begin{equation*}
\max_{t\geq 0}J_{\lambda }\left( tu_{\epsilon }\right) <\frac{2}{n\text{ }%
K_{\circ }^{\frac{n}{4}}(f(x_{\circ }))^{\frac{n-4}{4}}}
\end{equation*}%
provided that%
\begin{equation*}
S_{g}(x_{\circ })>0\text{.}
\end{equation*}%
Which achieves the proof.
\end{proof}

\end{document}